\providecommand{\U}[1]{\protect\rule{.1in}{.1in}}
\newtheorem{theorem}{Theorem}
\newtheorem{claim}[theorem]{Claim}
\newtheorem{definition}[theorem]{Definition}
\newtheorem{lemma}[theorem]{Lemma}
\newtheorem{remark}[theorem]{Remark}
\newenvironment{proof}[1][Proof]{\noindent\textbf{#1.} }{\ \rule{0.5em}{0.5em}}
\newcommand{\R} {\ensuremath {\mathbb{R}}}
\newcommand{\E} {\ensuremath {\mathbb{E}}}
\newcommand{\calC} {\ensuremath {\mathcal{C}}}
\newcommand{\scrF} {\ensuremath {\mathscr{F}}}
\newcommand{\dd}  {\ensuremath{\mathrm{d}}}
\journal{JFA}
\begin{document}
\begin{frontmatter}

%% Title, authors and addresses

%% use the tnoteref command within \title for footnotes;
%% use the tnotetext command for the associated footnote;
%% use the fnref command within \author or \address for footnotes;
%% use the fntext command for the associated footnote;
%% use the corref command within \author for corresponding author footnotes;
%% use the cortext command for the associated footnote;
%% use the ead command for the email address,
%% and the form \ead[url] for the home page:
%%
%% \title{Title\tnoteref{label1}}
%% \tnotetext[label1]{}

%% \author{Name\corref{cor1}\fnref{label2}}
%% \ead{email address}
%% \ead[url]{home page}
%% \fntext[label2]{}
%% \cortext[cor1]{}
%% \address{Address\fnref{label3}}
%% \fntext[label3]{}

%% use optional labels to link authors explicitly to addresses:
%% \author[label1,label2]{<author name>}
%% \address[label1]{<address>}
%% \address[label2]{<address>}

\title{Noise Prevents Singularities in Linear Transport Equations}
\author[a]{E. Fedrizzi\corref{corr1}}  %\fnref{tel}
\ead{fedrizzi@math.univ-paris-diderot.fr}
\cortext[corr1]{Corresponding author}
%\fntext[tel]{Telephone: +33157279173}

\author[b]{F. Flandoli}
\ead{flandoli@dma.unipi.it}

\address[a]{Laboratoire de Probabilit\'es et Mod\`eles Al\'eatoires, Universit\'{e} Paris Did\'erot, Sorbonne Paris Cit\'e, 75205 Paris, France.}
\address[b]{Dipartimento di Matematica Applicata, Universit\`{a} di Pisa, Italia.  }

\begin{abstract}
A stochastic linear transport equation with multiplicative noise
is considered and the question of no-blow-up is investigated. The drift is
assumed only integrable to a certain power. Opposite to the deterministic
case where smooth initial conditions may develop discontinuities, we prove
that a certain Sobolev degree of regularity is maintained, which implies
H\"older continuity of solutions. The proof is based on a careful analysis
of the associated stochastic flow of characteristics.
\end{abstract}

%\begin{keyword}
%% keywords here, in the form: keyword \sep keyword
%% MSC codes here, in the form: \MSC code \sep code
%% or \MSC[2008] code \sep code (2000 is the default)
%\begin{keyword}

\begin{keyword}
stochastic linear trasport equation \sep no-blow-up \sep integrable drift \sep Sobolev initial conditions %\sep multiplicative noise 
\MSC[2010] 35R60 %PDEs with randomness, SPDEs
\sep 60H15 %SPDEs
\sep 35R05  %Partial differential equations with discontinuous coefficients or data
\sep 60H30  %Applications of stochastic analysis (to PDE, etc.)
\end{keyword}
\end{frontmatter}

\section{Introduction}

Consider the stochastic linear transport equation in Stratonovich form%
\[
\frac{\partial u}{\partial t}+b\cdot\nabla u+\sigma\nabla u\circ\frac{\dd W}%
{\dd t}=0,\qquad u|_{t=0}=u_{0}.
\]
Here $W=\left(  W_{t}\right)  _{t\geq0}$ is a $d$-dimensional Brownian motion
defined on a filtered probability space $\left(  \Omega,\scrF,\scrF_{t},P\right)  $,
the drift $b:\left[  0,T\right] \times \mathbb{R}^{d} \rightarrow\mathbb{R}%
^{d}$ is a given deterministic vector field, $\sigma\in\mathbb{R}$ and
$u_{0}:\mathbb{R}^{d}\rightarrow\mathbb{R}$ are given and the solution
$u=u\left(  x,t\right)  $ will be a scalar random field on $\left(
\Omega,\scrF,\scrF_{t},P\right)  $ defined for $\left( t, x\right)  \in \left[  0,T\right] \times  \mathbb{R}%
^{d}  $.

We deal with the problem of singularities of $u$ starting from a regular
initial condition $u_{0}$. When $\sigma=0$ and $b$ is not Lipschitz,
singularities may appear, in the form of discontinuities (or
blow-up of derivatives), as in the simple example $d=1$, $b\left(  x\right)
=-sign\left(  x\right)  \sqrt{\left\vert x\right\vert }$: any non-symmetric
smooth initial condition $u_{0}$ develops a discontinuity at $x=0$ for any
$t>0$, because there are different, symmetric, initial conditions $x_{0}$ for
the associated equation of characteristics%
\[
x^{\prime}\left(  t\right)  =b\left( t, x\left(  t\right) \right)  ,\quad
x\left(  0\right)  =x_{0}%
\]
which coalesce at $x=0$ at any arbitrary positive time. Opposite to the
question of uniqueness of weak $L^{\infty}$ solutions, where positive results
have been given under relatively weak assumptions on $b$, see for instance
\cite{DPL} and \cite{Amb}, it seems that good results of no blow-up are not available in the
deterministic case when $b$ is not Lipschitz.

The purpose of this paper is to show that, for $\sigma \neq0$ and $b$ of
class%
\[
b\in L_p^q := L^{q}\left(  0,T;L^{p}(\mathbb{R}^{d},\mathbb{R}^{d})\right)\, ,
\]
\begin{equation}
p,q\geq2 \, ,  \qquad    \frac{d}{p}+\frac{2}{q}<1    \label{pq}
\end{equation}
some regularity of the initial condition is maintained, in particular
discontinuities do not appear. We prove the following result.

\begin{theorem}
\label{Thm intro}If $\sigma\neq0$, (\ref{pq}) holds and $u_{0}\in
\cap_{r\geq1}W^{1,r}\left(  \mathbb{R}^{d}\right)  $ then there exists a unique solution $u$
such that
\[
P\left(  u\left(  t,\cdot\right)  \in\cap_{r\geq1}W_{loc}^{1,r}\left(
\mathbb{R}^{d}\right)  \right)  =1
\]
 for every $t\in\left[  0,T\right]  $.

\end{theorem}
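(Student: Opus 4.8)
The plan is to reduce the transport equation to its stochastic flow of characteristics and then transport the regularity of $u_0$ through that flow. Since $\sigma$ is constant, the Stratonovich and It\^o forms of the characteristic equation coincide, so the characteristics solve the It\^o SDE
\[
\dd X_t = b(t,X_t)\,\dd t + \sigma\,\dd W_t,\qquad X_0=x .
\]
Hypothesis (\ref{pq}) is exactly the Ladyzhenskaya--Prodi--Serrin condition under which this SDE has a unique strong solution generating a stochastic flow $\phi_t$ of diffeomorphisms with Sobolev-differentiable inverse. The natural candidate for the solution is the push-forward $u(t,x)=u_0\!\left(\phi_t^{-1}(x)\right)$, and the whole problem becomes: quantify how much spatial regularity survives composition with $\phi_t^{-1}$.

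First I would make the representation rigorous, checking that $u(t,x):=u_0(\phi_t^{-1}(x))$ is a weak solution and identifying it as the unique one in the admissible class (using a renormalization/commutator argument adapted to the stochastic setting, together with the controlled Jacobian of the flow). The regularity claim then reduces, via the chain rule
\[
\nabla u(t,x)=\bigl(D\phi_t^{-1}(x)\bigr)^{\!\top}\,\nabla u_0\!\left(\phi_t^{-1}(x)\right),
\]
to $L^r$ bounds on the Jacobian of the inverse flow. Changing variables $y=\phi_t^{-1}(x)$ on a ball $B$ turns $\int_B|\nabla u(t,x)|^r\,\dd x$ into an integral over $\phi_t^{-1}(B)$ of $|(D\phi_t(y))^{-1}\nabla u_0(y)|^r\,|\det D\phi_t(y)|$, so what must be controlled are the moments of $\|(D\phi_t)^{-1}\|$ and of $\det D\phi_t$ over the (random) preimage of $B$.

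To obtain these I would use the Zvonkin-type transformation: let $U$ solve the backward parabolic system
\[
\partial_t U+\tfrac{\sigma^2}{2}\Delta U+b\cdot\nabla U-\lambda U=-b,\qquad U(T,\cdot)=0 ,
\]
whose maximal-regularity (Krylov--R\"ockner) estimates under (\ref{pq}) give, for $\lambda$ large, $\|\nabla U\|_\infty<1$, so that $\psi:=\mathrm{id}+U$ is a diffeomorphism with bounded, weakly differentiable inverse. Applying It\^o's formula shows $Y_t=\psi(t,X_t)$ solves an SDE whose coefficients are regular enough (Lipschitz-type, with $\nabla b$ no longer appearing explicitly) that its Jacobian flow satisfies linear SDEs with moments of every order. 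Pushing these bounds back through the finite-regularity change of coordinates $\psi$ yields the required $L^r$ control of $D\phi_t$ and $(D\phi_t)^{-1}$.

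The main obstacle I anticipate is precisely this last extraction: the Jacobian $J_t=D\phi_t$ formally satisfies $\dd J_t=\nabla b(t,X_t)J_t\,\dd t$, but $\nabla b$ is only a distribution, so the bound cannot be read off directly and must be recovered through $Y_t$ and the transform $\psi$, which is why the gradient estimate $\|\nabla U\|_\infty<1$ and the second-order bounds on $U$ are essential. Two further points are worth flagging: the statement gives $W^{1,r}_{loc}$ rather than global $W^{1,r}$ exactly because the change of variables requires controlling the flow on bounded sets and one cannot expect uniform-in-space Jacobian bounds at infinity; and the passage from ``all moments finite'' to the almost-sure conclusion uses that $\cap_{r\ge1}W^{1,r}_{loc}$ is a countable intersection, so finiteness of each moment gives the full-measure event. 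Uniqueness then follows from the same renormalization estimate once existence and the flow regularity are established.
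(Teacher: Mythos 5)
Your existence skeleton (representation $u=u_0\circ\phi_t^{-1}$, Zvonkin transform $\psi=\mathrm{id}+U$ to control the flow's derivatives) matches the paper's, but there is a genuine gap at the chain-rule step. You propose to differentiate $u_0(\phi_t^{-1}(x))$ directly via $\nabla u(t,x)=\bigl(D\phi_t^{-1}(x)\bigr)^{\top}\nabla u_0\bigl(\phi_t^{-1}(x)\bigr)$; at this regularity level no such chain rule is available: $u_0\in\cap_{r\geq1}W^{1,r}$ is only H\"older (not Lipschitz), and $\phi_t^{-1}$ is only a H\"older homeomorphism with weak Sobolev-type derivatives, so the composition of two Sobolev maps need not be Sobolev and the displayed identity cannot simply be asserted. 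The paper explicitly sidesteps exactly this (``difficult because of lack of calculus''): it mollifies both the drift ($b_n\to b$ in $L_p^q$) and the data ($u_0^n\to u_0$), obtains classical solutions $u_n=u_0^n\circ\phi_0^{t,n}$ from Kunita's theory, proves bounds $\sup_n\sup_t\int\E|\nabla u_n(t,x)|^r\,\dd x<\infty$ uniform in $n$ (using the uniform moment estimate on $\nabla\phi_0^{t,n}$ of Lemma \ref{lemma stima derivata} and the flow convergence of Lemma \ref{lemma convergenza SDE}), and then transfers regularity to the limit by a weak-compactness lemma for random fields (Lemma \ref{lemma appendix}): the Sobolev regularity of $u$ comes out of weak $L^r(\Omega\times B_R)$ convergence of $\nabla u_n$, never out of a pointwise chain rule. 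Your Jacobian-moment program is the right input, but it must be run on the approximations and closed by such a compactness argument.

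The second, more serious gap is uniqueness: ``follows from the same renormalization estimate'' is not an argument. For $b$ merely in $L^q(0,T;L^p)$ there is no DiPerna--Lions/Ambrosio theory (no divergence or BV control), so renormalization alone cannot give uniqueness --- indeed for $\sigma=0$ uniqueness-type statements in this class fail, which is the whole point of the paper. The paper's mechanism is: for the difference $u=u^1-u^2$, show by a mollification/commutator argument that the \emph{square} of $u$ is again a weakly differentiable solution; then take expectations, whereupon the Stratonovich transport noise becomes the elliptic term $\frac{\sigma^2}{2}\Delta$ in the deterministic equation satisfied by $v:=\E[u^2]$; finally close the energy estimate $\int v^2(t,x)\,\dd x+\sigma^2\int_0^t\!\int|\nabla v|^2\,\dd x\,\dd s=-2\int_0^t\!\int v\,b\cdot\nabla v\,\dd x\,\dd s$ by Gagliardo--Nirenberg interpolation and Young's inequality, absorbing $\frac{\sigma^2}{2}\int|\nabla v|^2$ on the left and leaving a Gronwall factor $\bigl(\int|b|^p\,\dd x\bigr)^{2/(p-d)}$, which is time-integrable precisely because of the strict inequality $d/p+2/q<1$. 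This is where $\sigma\neq0$ enters the uniqueness proof; your sketch has no step in which the noise is used at all, so it cannot close.
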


The unique solution in this class is given by a representation
formula, in terms of $u_{0}$, involving a weakly differentiable stochastic
flow. By Sobolev embedding theorem, $u(t,\cdot)$ is $\alpha$-H\"older continuous for
every $\alpha\in (0,1)$, with probability one. Hence, from smooth initial
conditions, discontinuities cannot arise.

The precise formulation of the concept of solution and other details are
given in the sequel.

The intuitive idea is that, opposite to the deterministic case, when
$\sigma\neq0$ the characteristics cannot meet. They satisfy the
stochastic equation
\begin{equation}\label{SDE}
\dd X_{t}=b\left(  X_{t},t\right)  \dd t+\sigma \dd W_{t}
\end{equation}
which generates, under assumptions (\ref{pq}), a stochastic flow of H\"{o}lder
continuous homeomorphisms, with some weak form of differentiability. The
existence of an H\"{o}lder continuous stochastic flow has been proved in
\cite{FeFl1}, \cite{FeFl2}, \cite{Z}. A differentiability property in terms of finite increments
has been given in \cite{FeFl2}. Here we establish Sobolev type differentiability. A
similar Sobolev regularity of the flow is investigated in \cite{MNP} by different tools (Malliavin calculus). See also \cite{MP}.

The assumption (\ref{pq}) has been introduced in the framework of stochastic
differential equations by \cite{KR} who have proved strong uniqueness. In the fluid
dynamic literature, with $\leq$ in place of $<$, it is known as the
Ladyzhenskaya-Prodi-Serrin condition. One of its main consequences is that it
gives uniform bounds on gradients of solutions to an auxiliary parabolic
problem (see Theorem \ref{Main PDE Theorem} below) essential for our approach, along with good properties of
the second derivatives. 

The possibility that noise may prevent the emergence of singularities is an
intriguing phenomenon that is under investigation for several systems. For
linear transport equations with $b\in L^{\infty}\left(  \left[  0,T\right]
;C_{b}^{\alpha}\left(  \mathbb{R}^{d},\mathbb{R}^{d}\right)  \right)  $ it may
be deduced from \cite{FGP1} (the result presented here is more general). For
nonlinear systems there are negative results, like the fact that noise does
not prevent shocks in Burgers equation, see \cite{F}, and positive results for
special kind of singularities (collapse of measure valued solutions) for the
vorticity field of 2D Euler equations, see \cite{FGP2}, and for 1D Vlasov-Poisson
equation, see \cite{DFV}. Moreover, for Schr\"{o}dinger equations, there are
several theoretical and numerical results of great interest, see
 \cite{BD1} - %\cite{BD4},  \cite{DM1}, \cite{DM2}, 
 \cite{DT}. 
We do not list here the results concerning the restored uniqueness due to noise
and address to the lecture note \cite{F} on this subject.

After the result of Theorem \ref{Thm intro}, it remains open the question
whether the solution is Lipschitz continuous (or more)\ when $u_{0}\in
W^{1,\infty}\left(  \mathbb{R}^{d}\right)  $ (or more). In dimension $d$ we
think that this is a difficult question under assumption (\ref{pq}). The
answer is positive when $b\in L^{\infty}\left(  \left[  0,T\right]
;C_{b}^{\alpha}\left(  \mathbb{R}^{d},\mathbb{R}^{d}\right)  \right)  $
because the stochastic flow is made of diffeomorphisms, see \cite{FGP1} and it is
also positive in dimension $d=1$ for certain discontinuous drift $b$,
including for instance $b\left(  x\right)  =sign\left(  x\right)  $, see \cite{Att}.

It must be emphasized that, although this \textquotedblleft regularization by
noise\textquotedblright\ may look related to the regularization produced by
the addition of a Laplacian to the equation, in fact it preserves the
hyperbolic structure of the equation. The equations remain reversible and the
solution at time $t$ is, in the problem treated in this paper, just given by
the initial condition composed with a flow. If the initial condition has a
discontinuity, the solution also has a discontinuity; no smoothing effect is
introduced. However, the emergence of singularities (shocks in our case) is
prevented. 

The work is organized as follows. In Section \ref{sec 2} we present some results on regularity and approximation properties of the flow associated to the SDE (\ref{SDE}). They are obtained via the study of an associated SDE and the regularity of its solutions. The main results are contained in Lemmas \ref{lemma convergenza SDE} and \ref{lemma stima derivata}, while more technical results are collected in the Appendix (Section \ref{sec app1}). In Section \ref{sec existence} we define weakly differentiable solutions of the SPDE and prove their existence in Theorem \ref{main teo}. A technical result on convergence  of random fields in Sobolev spaces is left to the last Appendix. Finally, uniqueness of weakly differentiable solutions of the SPDE is proved in Section \ref{sec 1!}.

\section{Convergence Results}\label{sec 2}

In this section we present some technical results on an associated SDE that we will study as an intermediate step to obtain some regularity and approximation properties of the flow associated to the SDE (\ref{SDE}). The main results are contained in Lemmas \ref{lemma convergenza SDE} and \ref{lemma stima derivata}.

Let us start by setting the notation used and recalling some results. We will use the following auxiliary SDE, introduced in \cite{FeFl2}:
\begin{equation}\label{new SDE}
\dd Y_t = \lambda U(t,\gamma_0^{-1}(x) ) \dd t + \big(\nabla U(t,\gamma_0^{-1}(x) ) + Id \big) \dd W_t \ , \qquad Y_0=x \ .
\end{equation}
The link between this SDE and the one presented in the introduction is given by the $\calC^1$-diffeomorphism $\gamma_t$: $Y_t = \gamma_t \circ X_t \circ \gamma_0^{-1} $, where $\gamma_t(x) = x+ U(t,x)$. Here, $U:\R^{d+1}\to\R^d$ is the solution of the PDE
\begin{equation*}
 \left\{ \begin{array} [c]{c}
\partial_t U+\frac{1}{2}\Delta U + b\cdot \nabla U - \lambda U + b =0\\
U(T,x)=0
\end{array} . \right.
\end{equation*}
This PDE is well posed in the space $$H_{2,p}^q(T):=L^q \, \big(0,T\,;\,W^{2,p}(R^d)\big) \cap W^{1,q}  \, \big(0,T\,;\,L^p(R^d)\big)\, ;$$ we report here the precise result, given by \cite[Theorem 3.3]{FeFl2}. 

%da formulare con la f. Serve perchŽ U^n -> U, come FGP p.15 .
\begin{theorem}\label{Main PDE Theorem} Take $p,q$ such that (\ref{pq}) holds, $\lambda>0$ and two vector fields $b, f\,(t,x):\R^{d+1}\rightarrow\R^d$ belonging to $L_p^q(T)$. Then in $H_{2,p}^q(T)$ there exists a unique solution of the backward parabolic system
\begin{equation}  \label{PDE}
 \left\{ \begin{array} [c]{c}
\partial_t u+\frac{1}{2}\Delta u + b\cdot \nabla u - \lambda u + f =0\\
u(T,x)=0
\end{array} . \right.
\end{equation}
For this solution there exists a finite constant $N$ depending only on $d,p,q,T,\lambda$ and $\|b\|_{L_p^q(T)}$ such that
\begin{equation} \label{Stima di Schauder}
\|u\|_{H_{2,p}^q(T)} \le N \|f\|_{L_p^q(T)}.
\end{equation}
\end{theorem}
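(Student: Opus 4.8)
The plan is to treat the first–order term $b\cdot\nabla u$ as a perturbation of the constant–coefficient heat–type operator $\mathcal{L}_\lambda:=\partial_t+\tfrac12\Delta-\lambda$ and to solve (\ref{PDE}) by a contraction argument, with the smallness furnished by taking $\lambda$ large and then reducing the general case to that one. Denote by $S_\lambda f$ the unique solution of $\mathcal{L}_\lambda u+f=0$, $u(T,\cdot)=0$, given by the backward Duhamel formula against the heat semigroup $e^{\frac{s}{2}\Delta}$. The backbone is the classical parabolic maximal regularity estimate in anisotropic mixed norms: $S_\lambda$ maps $L_p^q(T)$ boundedly into $H_{2,p}^q(T)$, with operator norm bounded uniformly for $\lambda$ bounded below. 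This is the Calder\'on--Zygmund / Fourier–multiplier estimate for the Gaussian kernel in $L^q_tL^p_x$ spaces, which I would quote from the standard parabolic literature for $\mathcal{L}_\lambda$, $\lambda>0$.

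The crucial quantitative input is that the \emph{gradient} of $S_\lambda$ gains a negative power of $\lambda$. Heuristically, $\|\nabla e^{\frac{s}{2}\Delta}\|_{L^p\to L^\infty}\sim s^{-\frac12-\frac{d}{2p}}$, and the damping $e^{-\lambda s}$ in the representation of $S_\lambda$ turns the time integral into a factor $\lambda^{-(\frac12-\frac{d}{2p})}$; accounting also for the $L^q_t$ integration one obtains, under the \emph{strict} condition (\ref{pq}),
\[
\|\nabla S_\lambda f\|_{L^\infty([0,T]\times\R^d)}\ \le\ N\,\lambda^{-\theta}\,\|f\|_{L_p^q(T)},\qquad \theta=\tfrac12\Big(1-\tfrac{d}{p}-\tfrac{2}{q}\Big)>0 .
\]
It is precisely here that the subcriticality $\frac{d}{p}+\frac{2}{q}<1$ is used: equality would give $\theta=0$ and no smallness. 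The rigorous version of this bound, via mixed–norm heat kernel estimates, is the main obstacle, and the heart of the matter.

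With these two estimates the fixed point is run not in $H_{2,p}^q(T)$ (where the second–order constant carries no gain) but in the space of functions with bounded spatial gradient. Using the pointwise bound $\|b\cdot\nabla w\|_{L^p_x}\le\|\nabla w\|_{L^\infty_x}\|b\|_{L^p_x}$ followed by $L^q_t$ integration, one gets $\|b\cdot\nabla w\|_{L_p^q(T)}\le\|b\|_{L_p^q(T)}\|\nabla w\|_{L^\infty}$. Hence the map $\Phi(u)=S_\lambda f+S_\lambda(b\cdot\nabla u)$ satisfies $\|\nabla\Phi(u)-\nabla\Phi(v)\|_{L^\infty}\le N\lambda^{-\theta}\|b\|_{L_p^q(T)}\|\nabla(u-v)\|_{L^\infty}$, which is a contraction once $\lambda$ is large enough that $N\lambda^{-\theta}\|b\|_{L_p^q(T)}<1$. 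The fixed point $u$ has bounded gradient, so $b\cdot\nabla u\in L_p^q(T)$ and $u=S_\lambda(f+b\cdot\nabla u)\in H_{2,p}^q(T)$ by full maximal regularity; the Schauder estimate (\ref{Stima di Schauder}) then follows by inserting the gradient bound $\|\nabla u\|_{L^\infty}\le 2N\lambda^{-\theta}\|f\|_{L_p^q(T)}$ into $\|u\|_{H_{2,p}^q(T)}\le N\big(\|f\|_{L_p^q(T)}+\|b\|_{L_p^q(T)}\|\nabla u\|_{L^\infty}\big)$. Uniqueness is immediate from linearity: the difference of two solutions solves the problem with $f=0$, and the estimate forces it to vanish.

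Finally, to remove the restriction that $\lambda$ be large and cover every $\lambda>0$, I would use the exponential substitution $u(t,x)=e^{-\mu t}v(t,x)$: this turns (\ref{PDE}) into the same system for $v$ with $\lambda$ replaced by $\lambda+\mu$ and $f$ replaced by $e^{\mu t}f$, whose norm is controlled by $e^{\mu T}\|f\|_{L_p^q(T)}$ on the finite interval $[0,T]$. Choosing $\mu$ so that $\lambda+\mu$ exceeds the threshold above yields existence, uniqueness and the estimate for $v$, and hence for $u$, with a constant $N$ depending on $d,p,q,T,\lambda$ and $\|b\|_{L_p^q(T)}$, as claimed.
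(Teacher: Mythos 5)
First, a point of reference: the paper does not prove Theorem \ref{Main PDE Theorem} at all --- it is quoted from \cite[Theorem 3.3]{FeFl2}, which in turn adapts the argument of \cite{KR} (Section 10). So your attempt must be compared with that standard proof. Your two analytic inputs are exactly the pillars of that proof: (i) maximal regularity in the mixed norms $L^q_t L^p_x$ for the constant--coefficient operator $\partial_t+\tfrac12\Delta-\lambda$, and (ii) a sup--norm bound on the gradient gaining a factor $\lambda^{-\theta}$ with $\theta=\tfrac12\big(1-\tfrac{d}{p}-\tfrac{2}{q}\big)>0$, which is precisely where the strict inequality in (\ref{pq}) is used. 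Where you differ is in the execution: you run a Picard iteration on the Duhamel formula, contracting in the bounded--gradient class, and then upgrade the fixed point to $H_{2,p}^q(T)$ by maximal regularity; Krylov--R\"ockner instead prove the a priori estimate (\ref{Stima di Schauder}) for \emph{arbitrary} $H_{2,p}^q$ solutions, using the parabolic embedding of $H_{2,p}^q$ into functions with bounded spatial gradient (with $\lambda$--decay, \cite[Lemma 10.2]{KR}), and then conclude existence by the method of continuity. Both routes are legitimate. Incidentally, your concern that the gradient bound is ``the main obstacle'' is misplaced: your heuristic \emph{is} the rigorous proof, since $\|\nabla e^{\frac r2\Delta}g\|_{L^\infty}\le C r^{-\frac12-\frac{d}{2p}}\|g\|_{L^p}$ follows from scaling of the Gaussian kernel and H\"older in space, and H\"older in time over $[t,T]$ against the damping $e^{-\lambda(s-t)}$ produces exactly $\lambda^{-\theta}$, the singularity $(s-t)^{-(\frac12+\frac{d}{2p})}$ being $q'$--integrable precisely because $\tfrac dp+\tfrac2q<1$. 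The exponential substitution reducing general $\lambda>0$ to large $\lambda$ is also correct.

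The one genuine gap is uniqueness. Your contraction yields existence, plus uniqueness only within the class of mild solutions with bounded spatial gradient; the theorem asserts uniqueness in $H_{2,p}^q(T)$. Writing that ``the estimate forces the difference to vanish'' is circular at this stage: estimate (\ref{Stima di Schauder}) has been derived only for the solution you constructed, not for an arbitrary competitor $w\in H_{2,p}^q(T)$ solving the homogeneous problem. To close the argument you must show that (i) every $w\in H_{2,p}^q(T)$ satisfies $\nabla w\in L^\infty([0,T]\times\R^d)$ --- this is the parabolic Sobolev embedding, valid again exactly because $\tfrac dp+\tfrac 2q<1$ (\cite[Lemma 10.2]{KR}) --- so that $b\cdot\nabla w\in L_p^q(T)$, and (ii) every such strong solution coincides with the mild (Duhamel) solution, so that your contraction estimate applies to it and gives $\|\nabla w\|_{L^\infty}\le N\lambda^{-\theta}\|b\|_{L_p^q(T)}\|\nabla w\|_{L^\infty}$; for large $\lambda$ this forces $\nabla w\equiv0$, whence $\Delta w=0$ and the equation reduces to $\partial_t w=\lambda w$, $w(T,\cdot)=0$, giving $w\equiv0$ (small $\lambda$ is then handled by your substitution). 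Both steps are standard, but without them the stated uniqueness in $H_{2,p}^q(T)$ is not proved; this is also exactly the point where the method--of--continuity route is more economical, since its a priori estimate applies to every $H_{2,p}^q$ solution from the outset. A cosmetic remark: to avoid the question of whether the gradient seminorm makes your function class a complete metric space, run the fixed point directly on $g:=\nabla u$ in the Banach space $L^\infty([0,T]\times\R^d;\R^d)$ via $g\mapsto\nabla S_\lambda(f+b\cdot g)$, and set $u:=S_\lambda(f+b\cdot g^{*})$ at the end.
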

%Moreover, by \cite[Lemma 3.4]{FeFl2}, for every $\nu>0$, a value of $\lambda$ can be found for which $\big\|\nabla U\big\|_{\calC^0} \le \nu$.

We will use the result of this theorem with $f=b$.

Let $b^n$ be a sequence of smooth vector fields converging to $b$ in $L_p^q$. Let $U$ be the unique solution to the PDE (\ref{PDE}) provided by the above Theorem and $U^n$ the solutions obtained using the approximating vector fields $b^n$. Lemma \ref{lemma U^n} shows that the vector fields $U^n$ converge in $H_{2,p}^q$ to $U$.

In \cite{FeFl2} is also proved the existence of H\"older flows of homeomorphisms for the two SDEs above, which we denote by $\phi_t(\cdot)$ for the SDE (\ref{SDE}), and $\psi_t(\cdot)$ for (\ref{new SDE}). We will use $\phi_{t}^{n}(\cdot)$ to denote the flows obtained for the approximating vector fields $b^n$, and $\psi_{t}^{n}(\cdot) $ for the flows corresponding to the auxiliaries SDEs obtained via the diffeomorphisms $\gamma^n_t = Id + U^n(t,\cdot)$. We will use $\phi_{0}^{t,n}(\cdot)$, and $\psi_{0}^{t,n}(\cdot)$ for the inverse flows.\\
We can now state and prove the two main regularity results on the flows $\phi_{0}^{t,n}$.

%\subsection{Proofs}

\begin{lemma}\label{lemma convergenza SDE}      %\limsup  ->  \lim !!!
For every $R>0$, $p\geq1$ and $x,y\in B_R$,
\begin{equation*}
\lim_{n\rightarrow\infty} \sup_{t\in\left[  0,T\right]  }\sup_{x\in B_R  } E\Big[ \big\vert \phi_{0}^{t,n}(x)-\phi_{0}^{t}(y)\big\vert ^{p} \Big]  \leq C_{p,T} \left\vert x-y\right\vert ^{p} \ .
\end{equation*}
In particular,
\begin{equation}\label{property 1}
\lim_{n\rightarrow\infty}\sup_{t\in\left[  0,T\right]  }\sup_{x\in B_R }E\left[  \left\vert \phi_{0}^{t,n}\left(  x\right)  -\phi_{0}%
^{t}\left(  x\right)  \right\vert ^{p}\right]  =0.
\end{equation}
\end{lemma}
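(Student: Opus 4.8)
The plan is to transfer the whole statement to the regularized auxiliary flows $\psi^{n}$ of \eqref{new SDE}, whose coefficients are built from the PDE solutions $U^{n}$ and are far more regular than $b^{n}$. Recall the conjugacy $\psi_{t}^{n}=\gamma_{t}^{n}\circ\phi_{t}^{n}\circ(\gamma_{0}^{n})^{-1}$ with $\gamma_{t}^{n}=Id+U^{n}(t,\cdot)$, which for the inverse flows reads
\[
\phi_{0}^{t,n}=(\gamma_{0}^{n})^{-1}\circ\psi_{0}^{t,n}\circ\gamma_{t}^{n}.
\]
By Theorem \ref{Main PDE Theorem}, taking $\lambda$ large forces $\|\nabla U^{n}\|_{\infty}$ small uniformly in $n$ (through the embedding of $H_{2,p}^{q}$ into the space of fields with bounded, spatially H\"older continuous gradient, valid under \eqref{pq}), so every $\gamma_{t}^{n}$ and its inverse is bi-Lipschitz with constants independent of $n$ and $t$. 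Moreover Lemma \ref{lemma U^n} gives $U^{n}\to U$ in $H_{2,p}^{q}$, whence $\gamma_{t}^{n}\to\gamma_{t}$, $(\gamma_{t}^{n})^{-1}\to\gamma_{t}^{-1}$ and $\nabla U^{n}\to\nabla U$ uniformly in $(t,x)$.

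I would then split, for $x,y\in B_{R}$,
\[
\big|\phi_{0}^{t,n}(x)-\phi_{0}^{t}(y)\big|^{p}\le 2^{p-1}\big|\phi_{0}^{t,n}(x)-\phi_{0}^{t,n}(y)\big|^{p}+2^{p-1}\big|\phi_{0}^{t,n}(y)-\phi_{0}^{t}(y)\big|^{p},
\]
isolating a spatial-regularity term and a genuine convergence term. For the first, transporting the increment through the conjugacy and invoking the moment estimates for the inverse auxiliary flow proved in \cite{FeFl2} gives $E|\psi_{0}^{t,n}(a)-\psi_{0}^{t,n}(b)|^{p}\le C|a-b|^{p}$, with $C$ depending only on $\|U^{n}\|_{H_{2,p}^{q}}$, hence uniformly bounded in $n$; composing with the uniformly bi-Lipschitz diffeomorphisms yields $E|\phi_{0}^{t,n}(x)-\phi_{0}^{t,n}(y)|^{p}\le C_{p,T}|x-y|^{p}$ uniformly in $n$ and $t$. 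This already produces the right-hand side of the claim.

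The second term is the crux: it requires the pointwise convergence $\sup_{t}\sup_{y\in B_{R}}E|\phi_{0}^{t,n}(y)-\phi_{0}^{t}(y)|^{p}\to0$, i.e. \eqref{property 1}, which through the conjugacy amounts to $\psi_{0}^{t,n}\to\psi_{0}^{t}$ in $L^{p}$, uniformly in $t$ and on $B_{R}$. The obstacle here is structural, and I expect it to be the main difficulty. The two auxiliary SDEs are driven by the same Brownian motion and, by the previous paragraph, have uniformly Lipschitz drifts converging uniformly and diffusion coefficients $Id+\nabla U^{n}(t,(\gamma_{t}^{n})^{-1}(\cdot))$ converging uniformly; but the latter are only H\"older continuous, so differencing them and applying It\^o produces a term controlled by the H\"older modulus $|\psi_{0}^{t,n}-\psi_{0}^{t}|^{\beta}$ with $\beta<1$ rather than a Lipschitz one, and a bare Gronwall argument does not close. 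To handle this I would keep track of the fact that the genuine source of discrepancy is the original drift: \eqref{SDE} has constant diffusion and $b^{n}\to b$ in $L_{p}^{q}$, so the vanishing input is measured in $L_{p}^{q}$, and Krylov's estimate converts this $L_{p}^{q}$-smallness into smallness of the relevant time integrals along the paths; the H\"older diffusion contribution is then absorbed using the uniform moment and increment bounds of \cite{FeFl2} together with the uniform convergence $\nabla U^{n}\to\nabla U$.

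Finally I would reassemble: inserting both bounds into the split and passing to $\limsup_{n}$, the spatial term contributes $C_{p,T}|x-y|^{p}$ and the convergence term vanishes, giving the stated inequality; setting $x=y$ annihilates the spatial term and yields \eqref{property 1}. Uniformity in $t\in[0,T]$ and on $B_{R}$ survives throughout because every constant depends only on $T,R,p$ and the uniformly bounded norms $\|U^{n}\|_{H_{2,p}^{q}}$.
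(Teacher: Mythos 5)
Your architecture matches the paper's up to the decisive point: transfer to the auxiliary flows via $\gamma_t^n=Id+U^n(t,\cdot)$, uniform-in-$n$ control of $U^n$, $\nabla U^n$ from Theorem \ref{Main PDE Theorem} and Lemma \ref{lemma U^n}, and the closing observation that the backward flows solve the same type of SDE with drift of opposite sign. Your splitting into a spatial-increment term plus a same-point convergence term is also legitimate (the paper instead runs a single It\^o estimate on $\big|\psi_t^n(x)-\psi_t(y)\big|^a$, which delivers both conclusions at once, with \eqref{property 1} as the case $x=y$). But there is a genuine gap exactly where you locate ``the crux'': the quadratic-variation term coming from the difference of diffusion coefficients, which after the uniform convergence $\nabla U^n\to\nabla U$ reduces to $\big|\nabla U^n(t,\phi_t^n)-\nabla U^n(t,\phi_t)\big|^2$ with $\nabla U^n$ only H\"older. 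Your proposed resolution --- ``Krylov's estimate converts this $L_p^q$-smallness into smallness of the relevant time integrals along the paths; the H\"older diffusion contribution is then absorbed using the uniform moment and increment bounds'' --- is not an argument. Applying Krylov's estimate to $b^n-b$ along the original SDE does not help: differencing the original SDEs also produces $b(s,\phi^n_s)-b(s,\phi_s)$, which is out of control for merely integrable $b$ (this is the very reason for the Zvonkin-type transformation); and in the auxiliary SDE the obstruction is not smallness of any input but the absence of Lipschitz control on $\nabla U^n$, so there is nothing for the ``uniform moment and increment bounds'' to absorb it with.

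The paper's missing mechanism is specific and is what makes the lemma true. One writes the bad term as $\big|\phi^n_s-\phi_s\big|^2\,\dd A_s^n$, where
\begin{equation*}
A_t^n = 2\int_0^t \frac{\big|\nabla U^n(s,\phi^n_s)-\nabla U^n(s,\phi_s)\big|^2}{\big|\phi^n_s-\phi_s\big|^2}\,\mathbbm{1}_{\{\phi^n_s\neq\phi_s\}}\,\dd s \ ,
\end{equation*}
proves that $A_T^n$ has exponential moments bounded uniformly in $n$ --- this is where Krylov-type estimates genuinely enter, through $\nabla^2 U^n\in L_p^q$ evaluated along interpolated flows $\phi^{n,r}$, see Lemmas \ref{lem e^A^n} and \ref{lemma integrability b(X)} --- and then runs the It\^o--Gronwall argument on the weighted quantity $e^{-B_t^n}\big|\psi_t^n-\psi_t\big|^a$, with $B^n$ a constant multiple of $A^n$, removing the weight at the end by H\"older using $\E\big[e^{2B_T^n}\big]\le C$. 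Without this exponential-functional device (or an equivalent stochastic Gronwall lemma), the H\"older term cannot be closed, exactly as you yourself observed when noting that ``a bare Gronwall argument does not close.'' A secondary, related point: your first term relies on $\E\big|\psi_0^{t,n}(a)-\psi_0^{t,n}(b)\big|^p\le C|a-b|^p$ uniformly in $n$ as a citation from \cite{FeFl2}; that estimate is itself proved by the same exponential-functional technique, so invoking it does not bypass the difficulty --- it only relocates it, and the uniformity in $n$ still has to be checked through the dependence of all constants on $\|b^n\|_{L_p^q}$ alone.
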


\begin{proof}
\textbf{Step 1} (preliminary estimates). 
By lemma \ref{lemma U^n} we have that for every $r>0$ there exist a function $f$ s.t. $\lim_{n\to\infty} f(n)=0$ and
\begin{align*}
\sup_{x\in\R^d}\sup_{t\in[0,T]} \big|\nabla U(t,x)\big|  &\le \frac{1}{2} ,\\
\sup_{x\in B_r}\sup_{t\in [0,T]} \big|U^n(t,x)-U(t,x)\big|  &\le f(n)\ ,\\
\sup_{x\in B_r}\sup_{t\in [0,T]} \big|\nabla U^n(t,x)-\nabla U(t,x)\big|  &\le f(n)\ .
\end{align*}
Since $\phi_t(x)$ is jointly continuous in space and time, there exist an $r<\infty$ s.t. the image of $B_R\times [0,T]$ will be contained in $B_r$ for all $t\le T$. In the following we will always take $x,y\in B_R$. It follows that %servono U^n, non U. In questo modo basta che \phi (B_R) sia compatta, non devo mostrarlo uniformemente per tutte le \phi^n. 
\begin{align*}
\big| U^n(t,\phi_t^{n}(x)) - U(t,\phi_t(y))\big|   &\le f(n) + \frac{1}{2} \big| \phi^{n}_t(x) - \phi_t (y)\big| \ , \\
\big| \nabla U^n(t,\phi_t^{n}(x)) - \nabla U(t,\phi_t(y))\big|   &\le f(n) +  \big| \nabla U^n(t,\phi_t^{n}(x)) - \nabla U^n(t,\phi_t(y))\big| \ .
\end{align*}
To shorten notation, we will write $\phi^n$ and $\phi$ to denote $\phi^{n}_t(x)$ and $\phi_t(y)$, $U^n(\phi^n)$ and $U^n(x)$ to denote $U^n(t,\phi^n)$ e $U^n(0,x)$, etc. The same holds for the flows of the SDE (\ref{new SDE}). %: $\psi_t^n = \phi^n_t + U^n(t,\phi_t^n)$ and $\psi_t:=Y_t$;
From the definition $\psi_t^n = \gamma_t \circ \phi_t^n \circ (\gamma_0^n)^{-1} $ and the properties of the diffeomorphisms $\gamma_t^n$ obtained from Lemma \ref{lemma U^n} and Remark \ref{uniformity in n}, we immediately have
\begin{align}
|\psi^n - \psi| &\ge |\phi^n - \phi + U^n(\phi^n) - U^n(\phi) | - f(n) \nonumber  \\
&\ge \frac{1}{2} |\phi^n - \phi| - f(n) \label{X-Y}  \\
2 & \Big( |\psi^n - \psi| +  f(n) \Big)  \ge |\phi^n - \phi|  \nonumber
\end{align}
and
\begin{align}\label{X>Y}
|\psi^n - \psi| \le \frac{3}{2} |\phi^{n} - \phi| + f(n) \ .
\end{align}\\

\textbf{Step 2}  (computations). 
We start by proving the convergence of the flows of the auxiliary SDE (\ref{new SDE}). By It\^o formula, for any $a\ge2$
%\begin{align*}
%\dd \Big( \psi_t^{n}(x) - \psi_t(y)\Big) =& \lambda \Big( U^n(\phi^n) - U(\phi) \Big) \dd t  + \Big( DU^n(\phi^n) - DU(\phi) \Big) \dd W_t
%\end{align*}
%For any $a\ge2$
\begin{align*}
\frac{1}{a} \dd \Big| \psi^n-\psi\Big| ^a =&\ \Big| \psi^n-\psi\Big| ^{a-2} \bigg\{\lambda \big\langle (\psi^n-\psi) , U^n(\phi^n)-U(\phi) \big\rangle_{\R^d} \dd t\\
  & \ \ +  \big\langle (\psi^n-\psi) , \big(\nabla U^n(\phi^n)- \nabla U(\phi)\big)\cdot \dd W_t \big\rangle_{\R^d}\\
  & \ \ +  \frac{a-1}{2} \mathrm{Tr} \bigg( \big[ \nabla U^n(\phi^n)- \nabla U(\phi)\big] \big[ \nabla U^n(\phi^n)- \nabla U(\phi)\big]^T \bigg) \dd t   \bigg\}     \\
  =&\ \Big| \psi^n-\psi\Big| ^{a-2} \bigg\{ A_{1} + A_{2} + A_{3} \bigg\} \ .
\end{align*}
Let us analyze the three terms $A_1,A_2,A_3$. Using (\ref{X-Y}) we have
\begin{align*}
A_1 &\le \lambda |\psi^n-\psi| \Big( f(n) + \frac{1}{2} |\phi^n-\phi| \Big) \, \dd t \\
&\le \lambda  |\psi^n-\psi|^2 \dd t + 2 \lambda f(n)  |\psi^n-\psi| \, \dd t \ .
\end{align*}
Since $\nabla U^n$ is bounded (uniformly in $n$, see Lemma \ref{lemma U^n}) and by (\ref{integrability Y}) $|\psi^n|^a$ belongs to $L^2(\Omega\times[0,T])$ for any $a\ge1$, we can write $A_2= \dd M^n_t$, where for every $n$, $\dd M^n_t$ is the differential of a zero mean martingale. As for the third term, using twice the inequality $(\alpha+\beta)^2\le 2(\alpha^2+\beta^2)$ and the estimates of the first step, we get
\begin{align*}
\frac{2}{d^2(a-1)} A_3 & \le \Big\{ 2 \big| \nabla U^n(\phi^n) - \nabla U^n(\phi) \big|^2  + 2 f^2(n)\Big\} \, \dd t \\
&\le  \big| \phi^n - \phi \big|^2 \dd A_t^{n} + 2 f^2 (n) \, \dd t \\
&\le  8  \big| \psi^n - \psi \big|^2 \dd A_t^n + 8 f^2(n) \, \dd A_t^n  + 2 f^2 (n) \, \dd t \ ,%\\
%&\ \ +  f(n) \bigg[ \frac{2 \big| Y^n - Y \big|}{(1-\eta)^2}  +  4\eta + \Big(1+\frac{1}{(1-\eta)^2} \Big)f(n) \bigg] \dd t \ .
\end{align*}  
where for every $n$
\begin{equation}\label{eq A^n}
A_t^n := 2 \int_0^t \frac{\big| \nabla U^n(\phi^n_s) - \nabla U^n(\phi_s) \big |^2}{\big| \phi^n_s - \phi_s \big|^2} \mathbbm{1}_{\{\phi^n_s\neq \phi_s\}} \dd s
\end{equation}
is a nondecreasing adapted stochastic process, with $A_0^n=0$, and uniformly in $n$ $\E[A_T^n]\le C<\infty$, see Lemma \ref{lem e^A^n}. Set $B_t^n:= \big[4\, d^2 a (a-1)\big] A_t^n $.
%\begin{equation}\label{eq B^n}
%B_t^n:= A_t^n \, \frac{d^2 a (a-1) }{(1-\nu)^2}\ .
%\end{equation}
From the above estimates and after renaming $M_t$ (which remains a zero mean martingale%per il lemma \ref{lem stime Y}
), we get
\begin{align*}
\dd \Big( e^{-B_t^n} \big| \psi^n-\psi \big|^a \Big) \le &\  e^{-B_t^n} \bigg[ a  \lambda |\psi^n-\psi|^a    + 2a\lambda f(n) |\psi^n-\psi|^{a-1}   \bigg] \dd t \\
& + \dd M_t  + f^2(n) e^{-B_t^n} \big| \psi^n - \psi \big|^{a-2} \dd B_t^n \\
&  + d^2 a (a-1) \, e^{-B_t^n} f^2(n) \big| \psi^n - \psi \big|^{a-2} \dd t \, .
\end{align*}
Integrating in time, taking the expected value, and finally the supremum over $t\in[0,T]$, we get
\begin{align}
\sup_{t\in[0,T]} \E\Big[ e^{-B_t^n}  \big| \psi_t^n-\psi_t \big|^a \Big] \le & \  \big| \psi^n_0-\psi_0 \big|^a + a  \lambda \E \Big[ \int_0^T   e^{-B_s^n}  \big| \psi_s^n-\psi_s \big|^a \dd s \Big] \nonumber \\
&\hspace{-2,5cm} + C_{a,d,\lambda} f(n) \E \Big[  \int_0^T  e^{-B_s^n} \Big(  \big| \psi_s^n-\psi_s \big|^{a-1} + f(n) \big| \psi_s^n-\psi_s \big|^{a-2} \Big) \dd s  \Big]  \nonumber \\
&\hspace{-2,5cm} + f^2(n) \E \Big[ \int_0^T e^{-B_s^{n}}  \big| \psi^n_s - \psi_s \big|^{a-2} \dd B_s^n \Big] \, .  \label{termine claim}
\end{align}
The expected value in the second line above is bounded uniformly in $n$. This fact is easily seen using for each term H\"older inequality together with the integrability properties of the flows $\psi^n$ and of the exponential of the processes $B_s^n$, provided by (\ref{integrability Y}) and Lemma \ref{lem e^A^n} respectively. We claim that also the expected value of the last line is bounded.

\begin{claim}
There exists a constant $C$ s.t. for every $n$ and $p\ge0$
\begin{equation*}
\E \Big[ \int_0^T e^{-B_s^n}  \big| \psi^n_s - \psi_s \big|^{p} \dd B_s^n \Big] \le C \ .
\end{equation*}
\end{claim}

\begin{proof}[Proof of the Claim]
Using the definition of $B_t^n$ we can rewrite the term on the left hand side as
$$ \E \Big[ \int_0^T  e^{-B_s^n}   \big| \psi^n_s - \psi_s \big|^{p}  \frac{\big| \nabla U(s,\phi^n_s) - \nabla U(s,\phi_s ) \big|^2}{ \big| \phi^n_s - \phi_s \big|^2}   \dd s  \Big]  \ .   $$
Using H\"older inequality, for some $\varepsilon>1$ small (to be fixed later) and $k$ the conjugate exponent, we obtain the term
\begin{equation*}
\E \Big[ \int_0^T  e^{-k B_s^n}  \big| \psi^n_s - \psi_s \big|^{k p}   \dd s  \Big] \ , 
\end{equation*}
for which we have already obtained a uniform bound, and the term
\begin{equation}\label{termine brutto stima B^n}
\E \Big[ \int_0^T    \frac{\big| \nabla U^n(s,\phi^n_s) - \nabla U^n(s,\phi_s ) \big|^{2\varepsilon}}{ \big| \phi^n_s - \phi_s \big|^{2\varepsilon}}      \dd s  \Big] \ .
\end{equation}
For this term, we proceed as in the proof of Lemma \ref{lem e^A^n}. The key point is the estimate of the term
\begin{equation*}
\int_0^1 \E \Big[ \int_0^T   \big| \nabla^2 U^n(s,\phi^{n,r}_s) \big|^{2\varepsilon}     \dd s  \Big] \dd r \ ,
\end{equation*}
where
\begin{equation*}
\phi_t^{n,r} = rx+(1-r)y + \int_0^t r b^n (s,\phi^n_s) + (1-r) b(s,\phi_s) \dd s + W_t \ .
\end{equation*}
We can conclude as in the proof of Lemma \ref{lem e^A^n} if we use the result of Lemma \ref{lemma integrability b(X)}. In particular, (\ref{termine brutto stima B^n}) is controlled by $\|b\|_{L_p^q}$. \ \ \ 
\end{proof}\\

We return to the proof of Lemma \ref{lemma convergenza SDE}. Thanks to the uniform bounds obtained for the expectations in the second and third lines of (\ref{termine claim}), we can pass to the limit in $n$ to obtain
\begin{align*}
\limsup_{n} \sup_{t\in[0,T]} \E \Big[ e^{-B_t^n}  \big| \psi_t^n-\psi_t \big|^a \Big] &\\
& \hspace{-3cm} \le  \limsup_n C_{a}\big(|\phi_0(x)-\phi_0(y)|^{a}+f(n)^a \big) \\
& \hspace{-3cm} \ \ + C_{a,\lambda} \limsup_n  \E \Big[ \int_0^T   e^{-B_s^n}  \big| \psi_s^n-\psi_s \big|^a \dd s \Big]\\
& \hspace{-3cm}  \le  C_{a}|x-y|^{a} + C_{a,\lambda} \int_0^T \limsup_n \sup_{t\in[0,s]}  \E \Big[  e^{-B_t^n}  \big| \psi_t^n-\psi_t \big|^a \Big] \dd s \, .
\end{align*}
Using Gronwall lemma we get
\begin{equation}\label{eq limsup}
 \limsup_{n} \sup_{t\in[0,T]}\E \Big[ e^{-B_t^n}  \big| \psi_t^n-\psi_t \big|^a \Big]  \le C_{a,\lambda,T} \,  |x-y|^{a} \ .
\end{equation}
We can now get rid of the exponential factor using again H\"older inequality
\begin{align*}
\limsup_n \sup_{t\in[0,T]} \E\Big[ |\psi^n - \psi |^{a} \Big] &\le \limsup_n\bigg\{ \E \Big[ e^{2 B_T^n} \Big]^{1/2} \hspace{-0,3cm} \sup_{t\in[0,T]}  \hspace{-0,1cm} \E \Big[ e^{-2 B_t^n}  \big| \psi_t^n-\psi_t \big|^{2 a} \Big]^{1/2}  \bigg\} \\
&\le C_{p,\lambda, T} \, |x-y|^a \ .
\end{align*}
With $a=2p$, redefining $B_t^n$ as $1/2$ of the process defined above and using the relation (\ref{X-Y}), we  can finally transport this bound to the flows $\phi^n$:
\begin{align*}
\limsup_n \sup_{t\in[0,T]} \E\Big[ |\phi^n_t - \phi_t |^{p} \Big] &\le C_{p} \limsup_n \Big( \sup_{t\in[0,T]}\E\Big[ |\psi^n_t - \psi_t |^{p} \Big] + f^p(n) \Big) \\
&\le C_{p,\lambda, T}  |x-y|^p \ .
\end{align*}
Remark that all the estimates found are uniform in $x,y\in B(0,R)$, so that we have obtained the desired result for the forward flows. But since the backward flows $\phi^{t,n}_0(\cdot)$ and $\phi^t_0(\cdot)$ are solution of the same SDE driven by the drifts $-b^n$ and $-b$, the same result holds for them too. \ \ \ 
%Nota: non serve studiare la convergenza dei processi $B^n$ a $B$, basta che siano equilimitati.
\end{proof}

\begin{lemma} \label{lemma stima derivata}
For every $p\geq1$, there exists $C_{d,p,T}>0$ such that
\begin{equation}
\sup_{t\in\left[  0,T\right]  }\sup_{x\in \R^d }E\left[ \left\vert \nabla\phi_{0}^{t,n}\left(  x\right)  \right\vert ^{p}\right]  \leq C_{d,p,T}  \label{property 2}
\end{equation}
uniformly in $n$.
\end{lemma}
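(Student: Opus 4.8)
The plan is to bound the Jacobian of the auxiliary flow $\psi_t^n$ first, exactly as in Lemma \ref{lemma convergenza SDE}, and then transfer the estimate to $\phi_t^n$ (hence to its inverse) through the uniformly controlled diffeomorphisms $\gamma_t^n$. Since the backward flow $\phi_0^{t,n}$ solves the same SDE as the forward flow but driven by $-b^n$, which lies in the same class with the same $L_p^q$ bound, it suffices to prove a uniform-in-$n$ bound for the gradient of a forward flow; I would establish it for $\nabla\psi_t^n$ and at the very end replace $b^n$ by $-b^n$.

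First I would regard $Y_t=\psi_t^n(x)$ as the solution of the genuinely Markovian SDE with drift $\lambda U^n(t,(\gamma_t^n)^{-1}(y))$ and diffusion matrix $Id+\nabla U^n(t,(\gamma_t^n)^{-1}(y))$. The whole point of passing through $\psi^n$ is that these coefficients have good gradients: since $|\nabla U^n|$ is bounded uniformly in $n$ (Lemma \ref{lemma U^n}), $\gamma_t^n=Id+U^n$ is a diffeomorphism with $(\nabla\gamma_t^n)^{-1}$ uniformly bounded, so the gradient of the drift, $\lambda\,\nabla U^n\,(\nabla\gamma_t^n)^{-1}$, is bounded uniformly in $n$, while the gradient of the diffusion, $\nabla^2 U^n\,(\nabla\gamma_t^n)^{-1}$, is controlled by $|\nabla^2 U^n|$. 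Differentiating the SDE in the initial condition, the Jacobian $\eta_t^n:=\nabla\psi_t^n(x)$ solves a linear variational equation with $\eta_0^n=Id$ whose drift matrix is bounded and whose diffusion matrices are of size $|\nabla^2 U^n(t,\phi_t^n)|$.

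Then I would apply It\^o's formula to $|\eta_t^n|^p$. The bounded drift-gradient contributes harmlessly a term of order $|\eta_t^n|^p\,\dd t$, and the stochastic integral is a martingale. The only dangerous term is the It\^o correction from the diffusion, of order $|\nabla^2 U^n(t,\phi_t^n)|^2\,|\eta_t^n|^p\,\dd t$, where $\nabla^2 U^n$ is \emph{not} bounded. This is precisely the obstruction already met in Lemma \ref{lemma convergenza SDE}, and I would treat it identically: introduce the nondecreasing adapted functional $A_t^n:=c\int_0^t|\nabla^2 U^n(s,\phi_s^n)|^2\,\dd s$ with $c$ large enough, and work with $e^{-A_t^n}|\eta_t^n|^p$. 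In the It\^o expansion of this product, the $-|\eta_t^n|^p\,\dd A_t^n$ term produced by the exponential absorbs the dangerous correction, leaving after expectation $\E[e^{-A_t^n}|\eta_t^n|^p]\le 1+C\int_0^t\E[e^{-A_s^n}|\eta_s^n|^p]\,\dd s$, whence Gronwall gives a bound uniform in $n$ and in the starting point.

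Finally I would remove the weight by H\"older's inequality, $\E[|\eta_t^n|^p]\le\E[e^{2A_T^n}]^{1/2}\sup_t\E[e^{-2A_t^n}|\eta_t^n|^{2p}]^{1/2}$, the first factor being bounded uniformly in $n$ by Lemma \ref{lem e^A^n} (after adjusting $c$) and the second by the Gronwall step run with exponent $2p$. The uniform exponential integrability of $A_t^n$, and its uniformity in the initial point $x\in\R^d$, rest on the global $L_p^q$ control of $\nabla^2 U^n$ from Theorem \ref{Main PDE Theorem} together with the Khasminskii/Krylov-type estimates of Lemmas \ref{lem e^A^n} and \ref{lemma integrability b(X)}; this is what lets the supremum run over all of $\R^d$ rather than a ball. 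This yields $\sup_t\sup_x\E[|\nabla\psi_t^n(x)|^p]\le C$ uniformly in $n$, and I would transport it to $\phi$ via $\nabla\phi_t^n=(\nabla\gamma_t^n)^{-1}\,\nabla\psi_t^n\,\nabla\gamma_0^n$, whose $\gamma$-factors are uniformly bounded; replacing $b^n$ by $-b^n$ then gives the stated bound for $\phi_0^{t,n}$. The main obstacle throughout is the unbounded correction $|\nabla^2 U^n|^2|\eta_t^n|^p$, and everything hinges on the exponential-weight device combined with exponential moments of $A_t^n$ that are uniform in $n$ and in $x$.
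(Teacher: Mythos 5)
Your proposal is correct and follows essentially the same route as the paper's own proof: reduction to the forward flow by replacing $b^n$ with $-b^n$, transfer between $\nabla\phi^n$ and $\nabla\psi^n$ through the uniformly controlled diffeomorphisms $\gamma^n_t$, It\^o's formula on $|\nabla\psi^n_t|^p$ with the exponential weight $e^{-A_t^n}$ built from $A_t^n = c\int_0^t |\nabla^2 U^n(s,\phi_s^n)|^2\,\dd s$, Gronwall, and finally H\"older's inequality combined with the uniform-in-$n$ exponential moments of $A_T^n$ from Lemma \ref{lem e^A^n}. The only (inessential) difference is that you carry the chain-rule factor $(\nabla\gamma_t^n)^{-1}$ explicitly in the variational equation, which the paper suppresses since it is uniformly bounded by (\ref{stima invertibilita}).
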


\begin{proof}
Again, since the backward flow satisfies the same SDE of the forward flow with a drift of opposite sign, it is enough to show that the uniform bound (\ref{property 2}) holds for the forward flows.
Let $\theta^n$ and $\xi^n$ be the derivatives of $\phi^n$ and $\psi^n$, respectively. Since $\phi^n_t = (\gamma_t^n)^{-1} \circ \psi _t \circ \gamma_0^n$, from (\ref{stima invertibilita}) we have $|\theta^n_i |^p \le C_{d,p} |\xi^n|^p $. Therefore, we only need to show that the estimate (\ref{property 2}) holds for the flow $\psi^n$, which solves
\begin{equation*}
\dd \xi_t^{n} (x)= \lambda \nabla U^n \big(  t,\phi_t^n(x) \big) \, \xi_t^{n} (x)\, \dd t +  \nabla^2 U^n \big(  t,\phi_t^n (x)  \big) \,  \xi_t^{n} (x)  \,\dd W_t  
\end{equation*}
with initial condition $\xi_0^{n} (x) = Id$. For the rest of the proof we take any fixed $x\in \R^d$. $\nabla U^n$ is bounded uniformly in $n$ and the function $\nabla^2 U$ is at least in $L_p^q$, so that the last term is the differential of a martingale $(\dd M_t^{n})$ due to Lemma \ref{lemma integrability b(X)}. By It\^o formula we have therefore
\begin{equation*}
\dd |\xi^n|^p \le C |\xi^n_t|^p \dd t + \dd M_t^n + |\xi^n_t|^{p-2} \mathrm{Tr}\Big( \big[ \nabla^2 U^n \left(  t,\phi_t^n (x)  \right)  \xi_t^{n} \big]\big[ \nabla^2 U^n \left(  t,\phi_t^n (x)  \right)  \xi_t^{n} \big]^T\Big) \dd t \ .
\end{equation*}
The constant $C$ can be chosen independently of $n$, and the trace of the matrix in the last term above can be controlled by a constant $C_{p,d}$\,, depending on $p$ and the dimension $d$ of the space, times $|\xi^n_t|^2 |\nabla^2U^n(t, \phi_t^n)|^2$. Introduce the process
\begin{equation}\label{new A^n}
A_t^n := C_{p,d} \int_0^t \big| \nabla^2U^n\big(s, \phi_s^n \big) \big|^2 \dd s \ .
\end{equation}
This is a continuous, adapted, non decreasing process, with $A^n_0=0$ and, due to Lemma \ref{lemma integrability b(X)}, $\E[A^n_T]\le C$ uniformly in $n$. Lemma \ref{lem e^A^n} even provides the bound $\E\big[ e^{k A_t^n} \big] \le C_{\|U^n\|}$ for any real constant $k$. We can therefore find a bound uniform in $n$ reasoning as in Lemma \ref{lemma U^n}. We find that
\begin{equation*}
\dd \, e^{-A^n_t} |\xi_t^n|^p \le C e^{-A^n_t} |\xi_t^n|^p \, \dd t + e^{-A^n_t}  \, \dd M_t^n 
\end{equation*}
and after integrating and taking the expected value one obtains
\begin{equation*}
\E \big[e^{-A^n_t} |\xi_t^n|^p   \big] \le |\xi_0|^p + C \int_0^t \E \big[ e^{-A^n_s} |\xi_s^n|^p \big] \, \dd s \, .
\end{equation*}
Take the supremum over all $t\in[0,T]$ and apply Gronwall inequality to get
\begin{equation*}
\sup_{t\in[0,T]} \E \big[e^{-A^n_t} |\xi_t^n|^p  \big] \le C_T  |\xi_0^n|^p = C_{d,p,T} \ ,
\end{equation*}
uniformly in $n$ and $x\in\R^d$. Using H\"older inequality as in the proof of the previous lemma, we finally obtain estimate (\ref{property 2}) for the derivative of the flow $\psi^n$, and this concludes the proof. \ \ \ 
\end{proof}

\section{Main Result of Existence of Weakly Differentiable Solutions}\label{sec existence}

Consider the SPDE in Stratonovich form%
\[
\frac{\partial u}{\partial t}+b\cdot\nabla u+\sigma\nabla u\circ\frac
{\dd W}{\dd t}=0 \, ,   \qquad   u|_{t=0}=u_{0}  \, .
\]
The It\^{o} formulation (as explained in detail also in \cite{FGP1}) is%
\[
\dd u+b\cdot\nabla u \, \dd t+\sigma\nabla u \, \dd W = \frac{\sigma^{2}}{2} \Delta u \, \dd t \, ,    \qquad   u|_{t=0}=u_{0} \, .
\]
In this section we assume $b\in L_p^q$, with $p,q$ satisfying condition (\ref{pq}).

\begin{definition}
\label{main definition}Assume that $b\in L_p^q$, with $p,q$ as in (\ref{pq}). We say
that $u$ is a weakly differentiable solution of the SPDE if

\begin{enumerate}
\item $u:\Omega\times\left[  0,T\right]  \times\mathbb{R}^{d}\rightarrow
\mathbb{R}$ is measurable, $\int u\left(  t,x\right)  \varphi\left(  x\right)
\dd x$ (well defined by property 2 below) is progressively measurable for each
$\varphi\in C_{0}^{\infty}\left(  \mathbb{R}^{d}\right) ; $

\item $P\left(  u\left(  t,\cdot\right)  \in\cap_{r\geq1}W_{loc}^{1,r}\left(
\mathbb{R}^{d}\right)  \right)  =1$ for every $t\in\left[  0,T\right]  $ and  %qui si potrebbe anche togliere il loc
both $u$ and $\nabla u$ are in $\calC^0\big( [0,T] ; \cap_{r\geq1}L^{r} (\Omega \times \R^d) \big) ;$

\item for every $\varphi\in C_{0}^{\infty}\left(  \mathbb{R}^{d}\right)  $ and
$t\in\left[  0,T\right]  $, with probability one one has%
\begin{align*}
&  \int u(t,x)  \varphi(x) \, \dd x + \int_{0}^{t}\int  b(s,x)  \cdot \nabla u(s,x) \varphi(x) \,  \dd x \dd s\\
&  =\int u_{0} (x)  \varphi(x) \, \dd x + \sigma \sum_{i=1}^{d} \int_{0}^{t}\left(  \int u \left(  s,x\right)  \partial_{x_{i}}  \varphi\left(
x\right) \, \dd x \right)  \dd W_{s}^{i}\\
&  +\frac{\sigma^{2}}{2}\int_{0}^{t}\int u(s,x) \Delta \varphi (x) \, \dd x \dd s \, .
\end{align*}

\end{enumerate}
\end{definition}

\begin{remark}
The process $s\mapsto Y_{s}^{i}:=\int u\left(  s,x\right)  \partial_{x_{i}%
}\varphi\left(  x\right)  \dd x$ is progressively measurable by property 1 and
satisfies $\int_{0}^{T}\left\vert Y_{s}^{i}\right\vert ^{2} \dd s < \infty$ by
property 2, hence the It\^{o} integral is well defined.
\end{remark}

\begin{remark}
The term $\int_{0}^{t}\int b\left(  s,x\right)  \cdot\nabla u\left(
s,x\right)  \varphi\left(  x\right)  \dd x \dd s$ is well defined with probability
one because of the integrability properties in $\left(  t,x\right)  $ of
$b~$(assumptions) and $\nabla u$ (property 2).
\end{remark}

\begin{remark}
From 3 it follows that $\int u\left(  t,x\right)  \varphi\left(  x\right)  \dd x$
has a continuous adapted modification, for every $\varphi\in C_{0}^{\infty
}(  \mathbb{R}^{d})  $.
\end{remark}

Let $\phi_{t}\left(  \omega\right)  :\mathbb{R}^{d}\rightarrow\mathbb{R}^{d}$
be the $\alpha$-H\"{o}lder continuous stochastic flow of homeomorphisms, for
every $\alpha\in\left(  0,1\right)  $, associated to the SDE%
\[
\dd X_{t}^{x} = b\left(  t,X_{t}^{x}\right)  \dd t + \dd W_{t} \, ,      \qquad     X_{0}^{x} = x
\]
constructed in \cite{FeFl2}. The inverse of $\phi_{t}$ will be denoted by $\phi_{0}^{t}$.

\begin{theorem}\label{main teo}
Assume $b\in L_p^q$ with $p,q$ as in (\ref{pq}). If $u_{0}\in \cap_{r\geq1} W^{1,r} ( \mathbb{R}^{d})$
then $u\left(  t,x\right)  :=u_{0}\left(  \phi_{0}^{t}(x) \right)  $ is a weakly differentiable solution of the SPDE.
\end{theorem}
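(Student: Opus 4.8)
The plan is to verify the three requirements of Definition~\ref{main definition} for $u(t,x):=u_0(\phi_0^t(x))$ by approximation. Let $b^n\to b$ in $L_p^q$ be the smooth vector fields fixed above and set $u^n(t,x):=u_0(\phi_0^{t,n}(x))$. For each $n$ the coefficients are smooth, so $\phi_t^n$ is a flow of diffeomorphisms and the three properties hold for $u^n$ with $b^n$ in place of $b$; the heart of the matter is then to pass to the limit $n\to\infty$, where Lemmas~\ref{lemma convergenza SDE} and~\ref{lemma stima derivata} and the Sobolev convergence result of the last Appendix enter.

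For fixed $n$ I would obtain property~3 from It\^o's formula rather than from the theory of stochastic characteristics. Fix $\varphi\in C_0^\infty(\R^d)$; the change of variables $x=\phi_s^n(z)$ gives
\[
\int u^n(s,x)\,\varphi(x)\,\dd x=\int u_0(z)\,\varphi\big(\phi_s^n(z)\big)\,J_s^n(z)\,\dd z,\qquad J_s^n:=\det\nabla\phi_s^n .
\]
The decisive structural point is that the noise in~(\ref{SDE}) is additive, so $\nabla\phi_s^n$ solves a linear equation with no stochastic-integral term and $J_s^n$ is of bounded variation, $\dd J_s^n=J_s^n\,\mathrm{div}\,b^n(s,\phi_s^n)\,\dd s$. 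Thus the cross variation of $J^n$ with the martingale part of $\varphi(\phi_s^n)$ vanishes, and It\^o's product rule applied to $\varphi(\phi_s^n)\,J_s^n$, integrated against $u_0(z)\,\dd z$ and transported back to the variable $x$, yields
\[
\dd\!\int u^n\,\varphi\,\dd x=\int u^n\Big[\tfrac{\sigma^2}{2}\Delta\varphi+\mathrm{div}(\varphi\,b^n)\Big]\dd x\,\dd s+\sigma\sum_{i=1}^d\Big(\int u^n\,\partial_{x_i}\varphi\,\dd x\Big)\dd W_s^i .
\]
Since $\int u^n\,\mathrm{div}(\varphi\,b^n)\,\dd x=-\int b^n\!\cdot\!\nabla u^n\,\varphi\,\dd x$ (integration by parts, valid as $u^n$ is smooth and $\varphi$ compactly supported), integrating in $s$ reproduces exactly the identity of property~3 with $b^n$.

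Property~2 for $u^n$ rests on the chain rule $\nabla u^n(t,x)=\nabla u_0(\phi_0^{t,n}(x))\,\nabla\phi_0^{t,n}(x)$: combining the uniform gradient bound of Lemma~\ref{lemma stima derivata} with $u_0\in\cap_{r\ge1}W^{1,r}$ and the flow moment estimates collected in Section~\ref{sec 2} and the Appendix, one bounds $u^n$ and $\nabla u^n$ in $\calC^0\big([0,T];\cap_{r\ge1}L^r(\Omega\times\R^d)\big)$ uniformly in $n$, while Lemma~\ref{lemma convergenza SDE} gives $u^n\to u$ in these spaces. The Sobolev convergence result of the last Appendix then identifies the $L^r$-limit of $\nabla u^n(t,\cdot)$ with the weak gradient of $u(t,\cdot)$, which yields $u(t,\cdot)\in\cap_{r\ge1}W^{1,r}_{loc}$ almost surely together with the continuity in $t$; property~1 is routine from the joint measurability and adaptedness of $(\omega,t,x)\mapsto\phi_0^t(x)$.

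It remains to pass to the limit in the identity of property~3. The terms $\int u^n\varphi$, $\int u^n\Delta\varphi$ and the stochastic integrals $\int_0^t(\int u^n\,\partial_{x_i}\varphi)\,\dd W^i$ converge via the $L^r$-convergence $u^n\to u$ (the last through the It\^o isometry). The genuinely delicate step, which I expect to be the main obstacle, is the drift term $\int_0^t\!\int b^n\!\cdot\!\nabla u^n\,\varphi\,\dd x\,\dd s\to\int_0^t\!\int b\!\cdot\!\nabla u\,\varphi\,\dd x\,\dd s$, a product of $b^n\to b$ (only in $L_p^q$) with $\nabla u^n\to\nabla u$ (only in some $L^r$), i.e.\ of two sequences converging in different topologies. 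Splitting $b^n\!\cdot\!\nabla u^n-b\!\cdot\!\nabla u=(b^n-b)\!\cdot\!\nabla u^n+b\!\cdot\!(\nabla u^n-\nabla u)$ and applying H\"older in $(s,x)$ with exponents admissible under~(\ref{pq}), the first piece is handled by $\|b^n-b\|_{L_p^q}\to0$ against the uniform Sobolev bound on $\nabla u^n$ from Lemma~\ref{lemma stima derivata}, and the second by $\nabla u^n\to\nabla u$ tested against the fixed weight $b\in L_p^q$. This is precisely where condition~(\ref{pq}) is essential; once it is in force the $b^n$-identity passes to the $b$-identity, completing the proof.
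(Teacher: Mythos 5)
Your proposal reproduces the paper's overall architecture (smooth approximations, the uniform bounds of Lemmas~\ref{lemma convergenza SDE} and~\ref{lemma stima derivata}, the Appendix lemma for Sobolev regularity, and exactly the paper's splitting $b^n\cdot\nabla u^n-b\cdot\nabla u=(b^n-b)\cdot\nabla u^n+b\cdot(\nabla u^n-\nabla u)$ of the drift term), and your It\^o/Jacobian derivation of the approximate weak identity is a legitimate alternative to the paper's citation of Kunita. But there is a genuine gap at precisely the step you call decisive. You assert that $\nabla u^n\to\nabla u$ ``in some $L^r$'' and then dispatch the second piece by this convergence ``tested against the fixed weight $b\in L_p^q$''. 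No strong $L^r(\Omega\times B_R)$ convergence of the gradients is available, and none is proved in the paper: Lemma~\ref{lemma appendix} yields only \emph{weak} convergence, i.e.
\[
\E\Big[\int \partial_{x_i}u_n(s,x)\,\varphi(x)\,Z\,\dd x\Big]\longrightarrow \E\Big[\int \partial_{x_i}u(s,x)\,\varphi(x)\,Z\,\dd x\Big]
\]
for $\varphi\in C_0^\infty(\R^d)$ and bounded r.v.\ $Z$. Strong convergence of $\nabla u^n=\nabla u_0(\phi_0^{t,n})\nabla\phi_0^{t,n}$ would essentially require strong convergence of $\nabla\phi_0^{t,n}$, for which the paper (and your proposal) only has uniform moment bounds. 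This is not a presentational detail: because the available convergence of $\nabla u^n$ is only weak and only in expectation, one \emph{cannot} pass to the limit term by term in the almost-sure identity of property~3, as you propose. This is why the paper first takes expectations against an arbitrary bounded r.v.\ $Z$ (the ``very weak formulation'' (\ref{very weak formulation})), passes to the limit in that deterministic identity, and only at the end recovers the a.s.\ identity from the arbitrariness of $Z$. Your proposal contains no such device, so the limit passage for the term $b\cdot(\nabla u^n-\nabla u)$ does not go through as written.

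Even if one reads your ``tested against $b$'' as meaning weak convergence, two further ingredients are missing, and they are the content of the paper's Step~5: (i) the weak convergence from Lemma~\ref{lemma appendix} holds a priori only against $C_0^\infty$ test functions, whereas $b(s,\cdot)\varphi$ is merely in $L^p(\R^d)$, so one must extend it by density using the uniform bounds (\ref{sima unif nabla un}) and (\ref{sima unif nabla u}); (ii) this gives convergence of $h_n(s)=\E[\int Z\varphi\, b\cdot(\nabla u_n-\nabla u)\,\dd x]$ only for a.e.\ fixed $s$, and to conclude for $\int_0^t h_n(s)\,\dd s$ one needs a uniform $(1+\varepsilon)$-moment bound in $s$ and Vitali's theorem --- this is where condition (\ref{pq}) actually enters, not merely through a H\"older exponent count. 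A secondary flaw: since you do not mollify $u_0$, your $u^n=u_0(\phi_0^{t,n})$ is \emph{not} smooth (only H\"older and $W_{loc}^{1,r}$), so the justification ``valid as $u^n$ is smooth'' for the integration by parts is wrong as stated; either mollify $u_0$ as the paper does (it needs this also for the uniform convergence $u_0^n\to u_0$), or invoke the chain rule for Sobolev functions composed with $C^1$ random diffeomorphisms.
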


%\begin{remark}  % SERVE LOCALIZZARE??
%The proof will give more informations than those included in Definition
%\ref{main definition}. For instance, for every $R,r>0$ there is a constant
%$C\left(  R,r\right)  $ such that
%\[
%\E\left[  \int_{0}^{T}\int_{B\left(  0,R\right)  }\left\vert \nabla u\left(
%t,x\right)  \right\vert ^{r} \dd x \dd t\right]  \leq C\left(  R,r\right)  .
%\]
%\end{remark}

\begin{proof}
\textbf{Step 1} (preparation). The random field $(\omega,t,x)
\mapsto u_{0}\left(  \phi_0^t(x)  \right)  $ is jointly
measurable and $( \omega,t)  \mapsto\int u_0 \left( \phi_0
^t(x) \right)  \varphi\left(x\right) \dd x$ is progressively
measurable for each $\varphi\in C_{0}^{\infty}(  \mathbb{R}^{d})
$. Hence part 1 of Definition \ref{main definition} is true. We could prove
part 2 by chain rule and Sobolev properties of $\phi_{0}^{t}\left(  x\right)
$. However, a direct verification of part 3 from the formula $u\left(
t,x\right)  :=u_{0}\left(  \phi_{0}^{t}\left(  x\right)  \right)  $ is
difficult because of lack of calculus. Hence we choose to approximate
$u\left(  t,x\right)  $ by a smooth field $u_{n}\left(  t,x\right)  $; doing
this, we prove both 2 and 3 by means of this approximation.

Let $u_{0}^{n}$ be a sequence of smooth functions which converges to $u_{0}$ in $W^{1,r}(\R^d)$ and
uniformly on $\mathbb{R}^{d}$. It is easy to check that these properties
are satisfied for instance by $u_{0}^{n}\left(  x\right)  =\int\theta
_{n}\left(  x-y\right)  u_{0}\left(  y\right)  \dd y$ when $\theta_{n}$ are usual
mollifiers; for instance, the uniform convergence property comes from
\begin{align*}
\left\vert u_{0}^{n}\left(  x\right)  -u_{0}\left(  x\right)  \right\vert  &
\leq  \int\theta_{n}\left(  x-y\right)  \left\vert u_{0}\left(  y\right)
-u_{0}\left(  x\right)  \right\vert \dd y \\
& \leq C\int\theta_{n}\left(  x-y\right)  \left\vert y-x\right\vert^\alpha 
\dd y = C\int\theta_{n}\left(  y\right)  \left\vert y\right\vert^\alpha \dd y
\end{align*}
because $u_{0}\in \calC^{0,\alpha}$. 

Let $\phi_{t}^{n}\left(  \omega\right)  :\mathbb{R}^{d}\rightarrow
\mathbb{R}^{d}$ be the stochastic flow of smooth diffeomorphisms associated to
the equation
\[
\dd X_{t}^{x,n} = b_{n}\left(  t,X_{t}^{x,n}\right)  \dd t + \dd W_{t} \, ,    \qquad    X_{0}^{x,n} = x  \, ,
\]
where $b_{n}$ are smooth approximations of $b$ as considered in the previous
section, and let $\phi_{0}^{t,n}$ be the inverse of $\phi_{t}^{n}$. Then
$u_{n}\left(  t,x\right)  :=u_{0}^{n}\left(  \phi_{0}^{t,n}\left(  x\right)
\right)  $ is a smooth solution of%
\[
\dd u_{n} + b_{n} \cdot \nabla u_{n}  \, \dd t  + \sigma\nabla u_{n} \, \dd W  =  \frac{\sigma
^{2}}{2}\Delta u_{n} \,  \dd t \, ,     \qquad      u_{n}\big|_{t=0}=u_{0}^{n}  \, , %
\]
see \cite[Theorem 6.1.5]{Ku}, and thus it satisfies                   
\begin{align*}
&  \int u_{n}\left(  t,x\right)  \varphi\left(  x\right)  \dd x +\int_{0}^{t}\int
b_{n}\left(  s,x\right)  \cdot\nabla u_{n}\left(  s,x\right)  \varphi\left(
x\right)  \dd x \dd s\\
&  =\int u_{0}^{n}\left(  x\right)  \varphi\left(  x\right)  \dd x  + \sigma \sum_{i=1}%
^{d}\int_{0}^{t}\left(  \int u_{n}\left(  s,x\right)  \partial_{x_{i}}%
\varphi\left(  x\right)  \dd x \right)  \dd W_{s}^{i}\\
&  +\frac{\sigma^{2}}{2}\int_{0}^{t}\int u_{n}\left(  s,x\right)
\Delta\varphi\left(  x\right)  \dd x \dd s
\end{align*}
for every $\varphi\in C_{0}^{\infty}(  \mathbb{R}^{d})  $ and
$t\in\left[  0,T\right]  $, with probability one. We need to establish
suitable bounds on $u_{n}\left(  t,x\right)  $ and suitable convergence
properties of $u_{n}\left(  t,x\right)  $ to $u\left(  t,x\right)  $ in order
to apply Lemma \ref{lemma appendix} - which is the first step to obtain the regularity properties of $u$ of point 2 of Definition \ref{main definition} - and pass to the limit in the equation. More precisely, for every $\varphi\in C_{0}
^{\infty}(  \mathbb{R}^{d})  $, $t\in\left[  0,T\right]  $ and
bounded r.v. $Z$ we have
\begin{align*}
&  \E \left[  Z \int u_{n}\left(  t,x\right)  \varphi\left(  x\right)  \dd x\right]
+ \E \left[  Z\int_{0}^{t}\int b_{n}\left(  s,x\right)  \cdot\nabla u_{n}\left(
s,x\right)  \varphi\left(  x\right)  \dd x \dd s \right]  \\
&  =\int u_{0}^{n}\left(  x\right)  \varphi\left(  x\right)  \dd x + \sigma \sum_{i=1}%
^{d}  \E \left[  Z\int_{0}^{t}\left(  \int u_{n}\left(  s,x\right)  \partial
_{x_{i}}\varphi\left(  x\right)  \dd x \right)  \dd W_{s}^{i} \right]
\end{align*}%
\begin{equation}
+\frac{\sigma^{2}}{2} \E \left[  Z\int_{0}^{t}\int u_{n}\left(  s,x\right)
\Delta\varphi\left(  x\right)  \dd x \dd s \right]  .\label{very weak formulation}%
\end{equation}
We shall pass to the limit in each one of these terms. We are forced to use
this very weak convergence due to the term
\[
\E \left[  Z\int_{0}^{t}\int b_{n}\left(  s,x\right)  \cdot\nabla u_{n}\left(
s,x\right)  \varphi\left(  x\right)  \dd x \dd s \right]
\]
where we may only use weak convergence of $\nabla u_{n}$.

\textbf{Step 2} (convergence of $u_{n}$ to $u$). We claim that, uniformly in $n$ and for every $r\ge1$,
\begin{align}
\sup_{t\in[0T]} \int_{\R^d} \E \Big[ |u_n(t,x)|^r \Big] \dd x \le C_r \ , \label{stim unif un}\\
\sup_{t\in[0T]} \int_{\R^d} \E \Big[ |\nabla u_n(t,x)|^r \Big] \dd x \le C_r \ . \label{sima unif nabla un}
\end{align}
Let us show how to prove the second bound; the first one can be obtained in the same way. We use the representation formula for $u_n$ and H\"older inequality to obtain
\begin{equation*}
\bigg(\int_{\R^d}  \hspace{-2mm}  \E \Big[ |\nabla u_n(t,x)|^r \Big] \dd x \bigg)^2 \hspace{-0.1cm} \le \hspace{-0.5mm} \int_{\R^d}   \hspace{-2mm}  \E \Big[ \big| \nabla u_0^n\big( \phi_0^{t,n}(x) \big) \big|^{2r} \Big] \dd x \int_{\R^d}  \hspace{-2mm} \E \Big[\big| \nabla \phi_0^{t,n}(x) \big|^{2r} \Big] \dd x \, .
\end{equation*}
The last integral on the right hand side is uniformly bounded by (\ref{property 2}). Also the other integral term can be bounded uniformly: changing variables (recall that all functions involved are regular) we get
\begin{equation*}
\int_{\R^d}  \E \Big[ \big| \nabla u_0^n\big( \phi_0^{t,n}(x) \big) \big|^{2r} \Big] \dd x \le \int_{\R^d}   \big| \nabla u_0^n (y) \big|^{2r} \E\big[  J_{\phi_t^n(y)}  \big] \dd y \ ,
\end{equation*}
where $J_{\phi_t^n(y)} $ is the Jacobian of $\phi_t^n(y)$; this last term can be controlled using again H\"older inequality, (\ref{property 2}) and the convergence of $u_0^n$ in $W^{1,r}$ (for every $r\ge1$). Remark that all the bounds obtained are uniform in $n$ and $t$.

We consider now the problem of the convergence of $u_n$ to $u$. Let us first prove that, given $t\in\left[  0,T\right]  $ and $\varphi\in C_{0}^{\infty}(\mathbb{R}^{d})  $,
\begin{equation}
P-\lim_{n\rightarrow\infty}\int_{\mathbb{R}^{d}}u_{n}\left(  t,x\right)
\varphi\left(  x\right)  \dd x  =  \int_{\mathbb{R}^{d}}u\left(  t,x\right)
\varphi\left(  x\right)  \dd x \, \label{convergence 1}
\end{equation}
(convergence in probability). This is the first assumption of Lemma
\ref{lemma appendix} and allows also to pass to the limit in the first term of
equation (\ref{very weak formulation}) using the uniform bound (\ref{stim unif un}) and Vitali convergence theorem (we are on the compact support of the test function $\varphi$). Since
\begin{align*}
\int_{\mathbb{R}^{d}}u_{n}\left(  t,x\right)  \varphi\left(  x\right)  \dd x  &
=\int_{\mathbb{R}^{d}}u_{0}^{n}\left(  \phi_{0}^{t,n}\left(  x\right)
\right)  \varphi\left(  x\right)  \dd x \\
& \hspace{-1cm} = \hspace{-0,05cm} \int_{\mathbb{R}^{d}}  \hspace{-0,05cm} \left(  u_{0}^{n}-u_{0}\right)  \left(  \phi
_{0}^{t,n} \left(  x\right)  \right)  \varphi\left(  x\right)  \dd x  +  \hspace{-0,05cm} \int%
_{\mathbb{R}^{d}}  \hspace{-0,1cm} u_{0}\left(  \phi_{0}^{t,n}\left(  x\right)  \right)
\varphi\left(  x\right)  \dd x \ ,
\end{align*}
using Sobolev embedding $W^{1,2d}\hookrightarrow \calC^{0,1/2}$  we have%
\begin{align*}
\left\vert \int_{\mathbb{R}^{d}}\left(  u_{n}\left(  t,x\right)  -u\left(
t,x\right)  \right)  \varphi\left(  x\right)  \dd x \right\vert  &  \leq\left\Vert
u_{0}^{n}-u_{0}\right\Vert _{L^{\infty}}\left\Vert \varphi\right\Vert _{L^{1}%
}\\
&  + C \left\Vert \varphi\right\Vert _{L^{\infty}}\int_{B_R
}\left\vert \phi_{0}^{t,n}\left(  x\right)  -\phi_{0}^{t}\left(  x\right)
\right\vert^{1/2} \dd x \, .     % u_0 non  Lip!
\end{align*} % si usa Holder per togliere l'esponente 1/2, mettendo una dipendenza da R in C
The first term converges to zero by the uniform convergence of $u_{0}^{n}$ to 
$u_{0}$. To treat the second one, recall we have proved property
(\ref{property 1}). Hence%
\[
\lim_{n\rightarrow\infty}  \E \left[  \int_{B_R  }\left\vert
\phi_{0}^{t,n}\left(  x\right)  -\phi_{0}^{t}\left(  x\right)  \right\vert
\dd x \right]  =0
\]
and thus
\[
P-\lim_{n\rightarrow\infty}\int_{B_R  }\left\vert \phi
_{0}^{t,n}\left(  x\right)  -\phi_{0}^{t}\left(  x\right)  \right\vert \dd x =0.
\]
Property (\ref{convergence 1}) is proved. 

Similarly, we can show that, given $\varphi\in C_{0}^{\infty}\left(
\mathbb{R}^{d}\right)  $,
\begin{equation}
P-\lim_{n\rightarrow\infty}\int_{0}^{T}\left\vert \int_{\mathbb{R}^{d}}\left(
u_{n}\left(  t,x\right)  -u\left(  t,x\right)  \right)  \varphi\left(
x\right)  \dd x \right\vert ^{2}  \dd t  = 0 \, .\label{property 3b}%
\end{equation}
This implies that we can pass to the limit in the last two terms of equation
(\ref{very weak formulation}). Indeed, property (\ref{property 3b}) implies
that%
\[
P-\lim_{n\rightarrow\infty}\int_{0}^{t} \hspace{-0,05cm}  \left(  \int \hspace{-0,05cm}  u_{n}\left(  s,x\right)
\partial_{x_{i}}\varphi\left(  x\right)  \dd x \right) \hspace{-0,05cm}  \dd W_{s}^{i}   = \hspace{-0,05cm}  \int_{0}^{t} \hspace{-0,05cm}  \left(  \int  \hspace{-0,05cm}  u\left(  s,x\right)  \partial_{x_{i}}\varphi\left(  x\right)
\dd x \right) \hspace{-0,05cm}  \dd W_{s}^{i}%
\]
for each $i=1,...,d$. Moreover,
\[
\E \left[  \left\vert \hspace{-0,05cm} \int_{0}^{t} \hspace{-0,1cm} \left(  \int \hspace{-0,05cm} u_{n}\left(  s,x\right)
\partial_{x_{i}}\varphi\left(  x\right)  \dd x \right) \hspace{-0,05cm}  \dd W_{s}^{i}\right\vert
^{2}\right]   \hspace{-0,05cm}  = \E \left[  \int_{0}^{t}\left\vert \int \hspace{-0,05cm} u_{n}\left(  s,x\right)
\partial_{x_{i}}\varphi\left(  x\right)  \dd x \right\vert ^{2} \hspace{-0,1cm}  \dd s \right]  ,
\]
which is uniformly bounded in $n$ due to (\ref{stim unif un}). By Vitali convergence theorem we obtain that
\begin{align*}
&\lim_{n\rightarrow\infty}   \E   \left[  Z  \int_{0}^{t}  \left(  \int   u_{n}\left(
s,x\right)  \partial_{x_{i}}\varphi\left(  x\right)  \dd x \right)   \dd W_{s}^{i}\right]  \\
& \hspace{6cm} =   \E   \left[  Z  \int_{0}^{t}   \left(  \int   u\left(  s,x\right)
\partial_{x_{i}}\varphi\left(  x\right)  \dd x \right)   \dd W_{s}^{i}\right] .
\end{align*}
The proof of convergence for the last term of equation
(\ref{very weak formulation}) is similar.

\textbf{Step 3} (regularity of $u$). Let us prove property 2 of Definition
\ref{main definition}. The key estimate is property (\ref{property 2}). 

Given $r\geq1$ and $t\in\left[  0,T\right]  $, let us prove that $P\left(
u\left(  t,\cdot\right)  \in W_{loc}^{1,r}(  \mathbb{R}^{d})
\right)  =1$. We want to use Lemma \ref{lemma appendix} with $F=u$,
$F_{n}=u_{n}$. Condition 1 of Lemma \ref{lemma appendix} is provided by (\ref{convergence 1}). It is clear that $u_{n}\left(
t,\cdot\right)  \in W_{loc}^{1,r}(  \mathbb{R}^{d})  $ for $P$-a.e.
$\omega$, so that condition 2 follows from the uniform bound on $\nabla u_n$ obtained in (\ref{sima unif nabla un}).
We can apply Lemma \ref{lemma appendix} and get $u\left(  t,\cdot\right)
\in W_{loc}^{1,r}(  \mathbb{R}^{d})  $ for $P$-a.e. $\omega$.

Let us prove the second part of property 2 of Definition \ref{main definition}.
We have, from Lemma \ref{lemma appendix} and (\ref{sima unif nabla un}), 
\[
\E \left[  \int_{B_R  }\left\vert \nabla u\left(  t,x\right)
\right\vert ^{r}  \dd x \right]  \leq  \underset{n\rightarrow\infty}{\lim\sup} \ \E \left[
\int_{B_R  }\left\vert \nabla u_{n}\left(  t,x\right)
\right\vert ^{r}  \dd x \right]  \leq C_r
\]
for every $R>0$ and $t\in[0,T]$. Hence, by monotone convergence we have
\begin{equation} \label{sima unif nabla u}
\sup_{t\in[0,T]} \E \left[  \int_{\R^d  }\left\vert \nabla u\left(  t,x\right)
\right\vert ^{r}  \dd x \right]  \leq C_r \ .
\end{equation}
A similar bound can be proved for $u$ itself: using (\ref{stim unif un}), the convergence in probability proved in the previous step and Vitali convergence theorem we get that for any $r'<r$, $R>0$ and uniformly in time,
\begin{equation*}
\int_{B_R} \E\Big[ |u(t,x) |^{r'} \Big] \dd x = \lim_{n\to\infty} \int_{B_R} \E\Big[ |u_n(t,x) |^{r'} \Big] \dd x \le C_r \ ;
\end{equation*}
by monotone convergence it follows that
\begin{equation*}
\sup_{t\in[0,T]} \int_{\R^d} \E\Big[ |u(t,x) |^{r'} \Big] \dd x \le C_r\ .
\end{equation*}

\textbf{Step 4} (passage to the limit). Finally, we have to prove that we can 
pass to the limit in equation (\ref{very weak formulation}) and deduce that
$u$ satisfies property 3 of Definition \ref{main definition}. We have already
proved that all terms converge to the corresponding ones except for the term
$\E\left[  Z\int_{0}^{t}\int b_{n}\left(  s,x\right)  \cdot\nabla u_{n}\left(
s,x\right)  \varphi\left(  x\right)  \dd x \dd s \right]  $. We do not want to
integrate by parts, for otherwise we would have to assume something on
$\operatorname{div}b$. Since $b_{n}\rightarrow b$ in $L_{p}^{q}=L^{q}\left(
\left[  0,T\right]  ;L^{p}(  \mathbb{R}^{d})  \right)  $, it is
sufficient to use a suitable weak convergence of $\nabla u_{n}$ to $\nabla u$. Precisely,
\begin{align*}
& \E\left[  Z\int_{0}^{t}\int b_{n}\left(  s,x\right)  \cdot\nabla u_{n}\left(
s,x\right)  \varphi\left(  x\right)  \dd x \dd s \right] \\
& \hspace{2cm} -  \E \left[  Z\int_{0}^{t}\int
b\left(  s,x\right)  \cdot\nabla u\left(  s,x\right)  \varphi\left(  x\right)
\dd x \dd s \right]  =   I_{n}^{\left(  1\right)  }\left(  t\right)  +   I_{n}^{\left(  2\right)
}\left(  t\right)
\end{align*}%
\begin{align*}
I_{n}^{\left(  1\right)  }\left(  t\right)    & = \E \left[  Z \int_{0}^{t}%
\int  \big(  b_{n}\left(  s,x\right)  -b\left(  s,x\right)  \big)
\cdot\nabla u_{n}\left(  s,x\right)  \varphi\left(  x\right)  \dd x \dd s \right]  \\
I_{n}^{\left(  2\right)  }\left(  t\right)    & = \E \left[  Z\int_{0}^{t}%
\int\varphi\left(  x\right)  b\left(  s,x\right)  \cdot   \big(  \nabla
u_{n}\left(  s,x\right)  -\nabla u\left(  s,x\right)  \big)  \dd x \dd s \right]  .
\end{align*}
We have to prove that both $I_{n}^{\left(  1\right)  }\left(  t\right)  $ and
$I_{n}^{\left(  2\right)  }\left(  t\right)  $ converge to zero as
$n\rightarrow\infty$.  By H\"{o}lder inequality,
\[
I_{n}^{\left(  1\right)  }\left(  t\right)  \leq C\left\Vert b_{n}%
-b\right\Vert _{L^{q}\left(  \left[  0,T\right]  ;L^{p}(  \mathbb{R}%
^{d})  \right)  }   \E \left[  \left\Vert \nabla u_{n}\right\Vert
_{L^{q^{\prime}}\left(  \left[  0,T\right]  ;L^{p^{\prime}}(  \R^d )  \right)  }\right]
\]
where $1/p+1/p^{\prime}=1$ and $1/q+1/q^{\prime}=1$. 
Thus, from (\ref{sima unif nabla un}), $I_{n}^{\left(  1\right)  }\left(
t\right)  $ converges to zero.

Let us treat $I_{n}^{\left(  2\right)  }\left(  t\right)  $. Using the
integrability properties shown above we have%
\begin{align*}
& \E  \left[  Z\int_{0}^{t}\int\varphi\left(  x\right)  b\left(  s,x\right)
\cdot   \big(  \nabla u_{n}\left(  s,x\right)  -\nabla u\left(  s,x\right)
\big) \, \dd x \dd s \right]  \\
& \hspace{2cm} =\int_{0}^{t}  \E \left[  \int Z\varphi\left(  x\right)  b\left(  s,x\right)
\cdot   \big(  \nabla u_{n}\left(  s,x\right)  -\nabla u\left(  s,x\right)
\big) \, \dd x \right]  \dd s \, .
\end{align*}
The function%
\[
h_{n}\left(  s\right)  :=  \E \left[  \int Z\varphi\left(  x\right)  b\left(
s,x\right)  \cdot   \big(  \nabla u_{n}\left(  s,x\right)  -\nabla u\left(
s,x\right)  \big) \, \dd x \right]
\]
converges to zero as $n\rightarrow\infty$ for almost every $s$ and satisfies
the assumptions of Vitali convergence theorem (we shall prove these two claims
in Step 5 below). Hence $I_{n}^{\left(  2\right)  }\left(  t\right)  $
converges to zero. 

Now we may pass to the limit in equation (\ref{very weak formulation}) and get%
\begin{align*}
&  \E \left[  Z\int u\left(  t,x\right)  \varphi\left(  x\right)  \dd x \right]
+  \E \left[  Z\int_{0}^{t}\int b\left(  s,x\right)  \cdot\nabla u\left(
s,x\right)  \varphi\left(  x\right)  \dd x \dd s \right]  \\
&  =   \int u_{0}\left(  x\right)  \varphi\left(  x\right)  \dd x  +  \sigma \sum_{i=1}%
^{d} \E \left[  Z\int_{0}^{t}\left(  \int u\left(  s,x\right)  \partial_{x_{i}%
}\varphi\left(  x\right)  \dd x \right)  \dd W_{s}^{i}\right]  \\
&  +  \frac{\sigma^{2}}{2} \E \left[  Z\int_{0}^{t}\int u\left(  s,x\right)
\Delta\varphi\left(  x\right)  \dd x \dd s \right]
\end{align*}
The arbitrariness of $Z$ implies property 3 of Definition
\ref{main definition}.

\textbf{Step 5} (auxiliary facts). We have to prove the two properties of
$h_{n}\left(  s\right)  $ claimed in Step 4. Recall we may use Lemma
\ref{lemma appendix} at each value of time. It gives us
\begin{equation}
\E \left[  \int_{\mathbb{R}^{d}}\partial_{x_{i}}u\left(  s,x\right)
\varphi\left(  x\right)  Z \, \dd x \right]  =  \lim_{n\rightarrow\infty}  \E \left[
\int_{\mathbb{R}^{d}}\partial_{x_{i}}u_{n}\left(  s,x\right)  \varphi\left(
x\right)  Z  \, \dd x \right]  \label{convergence 4}%
\end{equation}
for every $\varphi\in C_{0}^{\infty}( \mathbb{R}^{d})$ and
bounded r.v. $Z$, at each $s\in[  0,T]  $. We have $b\in
L^{q}\left(  \left[  0,T\right]  ;L^{p}(  \mathbb{R}^{d})  \right)
$, hence $b\left(  s,\cdot\right)  \in L^{p}(  \mathbb{R}^{d})$
for a.e. $s\in\left[  0,T\right]  $. The space $C_{0}^{\infty}(
\mathbb{R}^{d})$ is dense in $L^{p}(  \mathbb{R}^{d})  $.
We may extend the convergence property (\ref{convergence 4}) to all
$\varphi\in L^{p}(  \mathbb{R}^{d})  $ by means of the bounds
(\ref{sima unif nabla un}) and (\ref{sima unif nabla u}).
Hence
$h_{n}\left(  s\right)  \rightarrow0$ as $n\rightarrow\infty$, for a.e.
$s\in\left[  0,T\right]  $. 

Moreover, for every $\varepsilon>0$ there is a constant $C_{Z, \varphi, \varepsilon}$ such that
\begin{align*}
\int_{0}^{T} \hspace{-0,3cm} h_{n}^{1+\varepsilon}(s) \dd s  & \leq C_{Z,\varphi
,\varepsilon} \hspace{-1,5mm} \int_{0}^{T} \hspace{-0,3cm} \E \hspace{-0,5mm} \bigg[  \int_{B_R  } \hspace{-0,3cm} \big|
b(s,x) \big|^{1+\varepsilon}\hspace{-1,5mm}\cdot \hspace{-0,05cm} \Big( \big|
\nabla u_{n}(s,x) \big|^{1+\varepsilon} \hspace{-1,5mm} + \hspace{-0,5mm} \big| \nabla
u(s,x) \big|^{1+\varepsilon} \Big)  \dd x \bigg] \hspace{-0,5mm} \dd s\\
& \leq C_{Z,\varphi,\varepsilon}\left\Vert b\right\Vert _{L_{p}^{q}%
}^{1+\varepsilon}\left(  \E \int_{0}^{T}\int_{B_R }\big\vert
\nabla u_{n}\left(  s,x\right)  \big\vert ^{r} \dd x \dd s \right)  ^{\frac
{1+\varepsilon}{r}}\\
& \quad +C_{Z,\varphi,\varepsilon}\left\Vert b\right\Vert _{L_{p}^{q}}%
^{1+\varepsilon}\left(  \E \int_{0}^{T}\int_{B_R }\big\vert
\nabla u\left(  s,x\right)  \big\vert ^{r}  \dd x \dd s \right)  ^{\frac
{1+\varepsilon}{r}}%
\end{align*}
for a suitable $r$ depending on $\varepsilon$ (we have used H\"{o}lder
inequality). The bounds (\ref{sima unif nabla un}) and (\ref{sima unif nabla u})
 imply that $\int_{0}^{T}h_{n}%
^{1+\varepsilon}\left(  s\right)  \dd s$ is uniformly bounded. Hence Vitali
theorem can be applied to prove that $I_{n}^{\left(  2\right)  }\left(  t\right)
=\int_{0}^{t}h_{n}\left(  s\right)  \dd s \rightarrow 0$ as $n \rightarrow \infty$.
The proof is complete. \ \ \ 
\end{proof}

\section{Uniqueness of Weakly Differentiable Solutions} \label{sec 1!}

\begin{theorem}
Weak solutions of Definition \ref{main definition} are unique.
\end{theorem}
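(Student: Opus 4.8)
The plan is to exploit linearity together with the transport of solutions along the stochastic characteristics $\phi_t$. By linearity of Definition \ref{main definition}, the difference $v:=u_{1}-u_{2}$ of two weak solutions with the same initial datum is again a weak solution, now with $v_{0}=0$; it therefore suffices to prove that any weak solution with vanishing initial condition vanishes identically. The heuristic is that $v$ is carried by the characteristics, so $v(t,\phi_{t}(x))$ should be constant in $t$ and hence equal to $v(0,x)=0$. Since $v$ is only $W^{1,r}_{loc}$ in space, I would make this rigorous by spatial mollification combined with the It\^o--Wentzell formula.

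First I would mollify in space: set $v^{\varepsilon}(s,x):=\int v(s,y)\theta_{\varepsilon}(x-y)\,\dd y$ with a standard mollifier $\theta_{\varepsilon}$. Choosing $\varphi(\cdot)=\theta_{\varepsilon}(x-\cdot)$ in property 3 of Definition \ref{main definition} and using $v_{0}=0$, one finds that, for each fixed $x$, the smooth-in-$x$ field $v^{\varepsilon}$ solves the It\^o SPDE
\begin{equation*}
\dd v^{\varepsilon}= -\,b\cdot\nabla v^{\varepsilon}\,\dd s-R_{\varepsilon}\,\dd s-\sigma\,\nabla v^{\varepsilon}\cdot \dd W+\tfrac{\sigma^{2}}{2}\Delta v^{\varepsilon}\,\dd s,
\end{equation*}
where $R_{\varepsilon}:=(b\cdot\nabla v)\ast\theta_{\varepsilon}-b\cdot\nabla v^{\varepsilon}$ is the usual commutator. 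Since $v^{\varepsilon}$ is smooth in $x$ and a continuous semimartingale in time, I would apply the It\^o--Wentzell formula to $v^{\varepsilon}(t,\phi_{t}(x))$, where $\phi_{t}$ is the flow of characteristics $\dd X_{t}=b(t,X_{t})\,\dd t+\sigma\,\dd W_{t}$. The transport drift $b\cdot\nabla v^{\varepsilon}$, the second-order It\^o corrections $\tfrac{\sigma^{2}}{2}\Delta v^{\varepsilon}$, and the two martingale contributions cancel exactly (this is precisely the computation showing that $u_{0}\circ\phi_{0}^{t}$ solves the equation), leaving only
\begin{equation*}
v^{\varepsilon}(t,\phi_{t}(x))=-\int_{0}^{t} R_{\varepsilon}(s,\phi_{s}(x))\,\dd s .
\end{equation*}

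The heart of the argument is the passage $\varepsilon\to0$. Because $v$ already has the Sobolev regularity of point 2 of Definition \ref{main definition}, $\nabla v$ is a genuine function and the commutator can be written as
\begin{equation*}
R_{\varepsilon}(s,x)=\int\big(b(s,x-z)-b(s,x)\big)\cdot\nabla v(s,x-z)\,\theta_{\varepsilon}(z)\,\dd z,
\end{equation*}
so that, by H\"older's inequality with exponents $p$ and $p'$ and the continuity of translations in $L^{q}(0,T;L^{p})$, together with the bound \eqref{sima unif nabla u} on $\nabla v$, one gets $\E\int_{0}^{T}\int_{B_{R}}|R_{\varepsilon}|\,\dd x\,\dd s\to0$ for every $R$. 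I would stress that this needs no control on $\mathrm{div}\,b$ and no $W^{1,p}$ regularity of $b$: the Sobolev regularity of $v$ does all the work, which is exactly why the approach of the existence proof (avoiding integration by parts against $b$) carries over. Testing the displayed identity against an arbitrary $\varphi\in C_{0}^{\infty}$, integrating in $x$ and taking expectation, I would change variables $y=\phi_{s}(x)$ and pass to the limit on both sides: the right-hand side vanishes by the commutator estimate and the moment bounds on the flow and its Jacobian coming from Lemma \ref{lemma stima derivata}, while the left-hand side converges to $\E\int \varphi(x)\,v(t,\phi_{t}(x))\,\dd x$. Hence $v(t,\phi_{t}(x))=0$ for a.e.\ $x$, a.s., and undoing the change of variables, using the time continuity in property 2, gives $v(t,\cdot)=0$ a.s.\ for every $t$.

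I expect the main obstacle to be this final limit in the commutator term: one must justify $R_{\varepsilon}\to0$ in $L^{1}$ of space--time--$\Omega$ \emph{after} composition with the random flow, controlling the change-of-variables Jacobian uniformly and interchanging the limit with the expectation and the time integral, which rests on the integrability of $\nabla v$ and on the flow estimates \eqref{property 2}. The remaining ingredients---the exact algebraic cancellation in the It\^o--Wentzell formula and the convergence $v^{\varepsilon}\to v$ in $L^{r}_{loc}$---are routine given the regularity established in the previous sections.
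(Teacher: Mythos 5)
Your overall strategy --- mollify, apply the It\^o--Wentzell formula to $v^{\varepsilon}(t,\phi_t(x))$, exploit the exact cancellation of the transport, It\^o-correction and martingale terms, and kill the commutator $R_\varepsilon$ by translation-continuity of $b$ in $L^p$ paired with $\nabla v\in L^{p'}$ --- is coherent, and the algebra you describe (the cancellation and the commutator identity) is correct. The genuine gap is at the point where you compose with, and later ``undo'', the flow. The estimates you invoke, Lemma \ref{lemma stima derivata} and (\ref{property 2}), are proved for the approximating flows $\phi_0^{t,n}$, uniformly in $n$, \emph{not} for the limit flow $\phi_0^{t}$. Your argument needs, for the limit flow itself: (i) Sobolev differentiability of $\phi_0^{t}$ with moment bounds on $\nabla\phi_0^{t}$; (ii) bounds on the Jacobian of $\phi_t$ and on its reciprocal (equivalently, on the Jacobian of the inverse flow), because both the limit $\varepsilon\to 0$ on the left-hand side and the commutator estimate on the right-hand side are performed after the random change of variables $y=\phi_s(x)$, whose image of $\mathrm{supp}\,\varphi$ is a random, non-uniformly-bounded set; and (iii) the validity of the area/change-of-variables formula for $\phi_0^{t}$, which for a homeomorphism that is merely H\"older continuous and weakly differentiable requires Lusin's condition (N). None of this is contained in the results you cite, and at this level of regularity it is not routine: it amounts to an extra layer of work (transfer Lemma \ref{lemma appendix} to the components of the flow, then invoke properties of $W^{1,r}$-homeomorphisms with $r>d$). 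The same issue reappears in your last line: deducing $v(t,\cdot)=0$ from $v(t,\phi_t(\cdot))=0$ a.e.\ requires that $\phi_0^{t}$ map null sets to null sets. Two smaller points: the It\^o--Wentzell formula needs a modification of $v^{\varepsilon}$ jointly continuous in $(t,x)$ together with its relevant derivatives and local characteristics, while Definition \ref{main definition} only gives, for each test function, a continuous semimartingale modification; and the bound (\ref{sima unif nabla un})--(\ref{sima unif nabla u}) you quote was proved for the particular solution constructed in Theorem \ref{main teo}, whereas for an arbitrary weak solution the corresponding integrability must be taken from property 2 of Definition \ref{main definition}.

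It is worth seeing how the paper sidesteps exactly this obstruction. It performs the same mollification/commutator step you do, but uses it only to show that the \emph{square} $u^{2}$ of the difference is again a weak solution; taking expectation in the It\^o form kills the stochastic integral, so $v=\E[u^{2}]$ solves a deterministic \emph{parabolic} equation in which the noise has become the dissipative term $\frac{\sigma^{2}}{2}\Delta v$. An energy identity, Gagliardo--Nirenberg interpolation and the strict inequality $\frac{d}{p}+\frac{2}{q}<1$ then allow the term $\int v\, b\cdot\nabla v$ to be absorbed, and Gronwall gives $v\equiv 0$. That route never touches the limit flow, which is precisely why it closes under the standing hypotheses; your characteristics route (the one used in \cite{FGP1} when $b$ is H\"older and the flow is a $C^{1}$ diffeomorphism) could in principle be completed here, but only after building the Sobolev/Jacobian theory for $\phi_0^{t}$ that your citations do not supply.
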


\begin{proof}
Let $u^{i}$ be two weakly differentiable solutions of equation
\[
\frac{\partial u}{\partial t}+b\cdot\nabla u+\sigma\nabla u\circ\frac{\dd W}%
{\dd t}=0 \, ,    \qquad   u|_{t=0}=u_{0} \, .
\]
Then $u:=u^{1}-u^{2}$ is a weakly differentiable solution of
\begin{equation}\label{eq1}
\frac{\partial u}{\partial t}+b\cdot\nabla u+\sigma\nabla u\circ\frac{\dd W}%
{\dd t}=0 \, ,    \qquad      u|_{t=0}=0 \, .
\end{equation}
We want to prove that $u$ is identically zero. We divide the proof in three steps.

\textbf{Step 1} (Equation for $u^2$)
The first step consists in proving that $u^{2}$ is also a weakly
differentiable solution of
\begin{equation}\label{eq2}
\frac{\partial u^{2}}{\partial t}+b\cdot\nabla u^{2}+\sigma\nabla u^{2}%
\circ\frac{\dd W}{\dd t}=0 \, ,    \qquad   u|_{t=0}= 0
\end{equation}
namely that
\begin{align*}
&  \int u^{2}\left(  t,x\right)  \varphi\left(  x\right)  \dd x+\int_{0}^{t}\int
b\left(  s,x\right)  \cdot\nabla u^{2}\left(  s,x\right)  \varphi\left(
x\right)  \dd x \dd s\\
&  =\sigma\sum_{i=1}^{d}\int_{0}^{t}\left(  \int u^{2}\left(  s,x\right)
\partial_{x_{i}}\varphi\left(  x\right)  \dd x\right)  \dd W_{s}^{i}\\
&  +\frac{\sigma^{2}}{2}\int_{0}^{t}\int u^{2}\left(  s,x\right)
\Delta\varphi\left(  x\right)  \dd x \dd s
\end{align*}
for any $\varphi\in\calC^\infty_0(\R^d)$. Let $\theta^{\varepsilon}$ be a sequence of standard mollifiers. From the definition of weak solution, using $\varphi_y^\varepsilon(x)=\theta^\varepsilon(y-x)$, we have
%\begin{align*}
%  \int u(t,x) \varphi(x) \dd x + &\int_{0}^{t} \int b( s,x) \cdot\nabla u(  s,x)  \varphi(x) \dd x \dd s\\
%&  +\sigma\sum_{i=1}^{d} \int_{0}^{t} \left(  \int  \partial_{x_{i}} u\left( s,x\right) \varphi\left(  x\right)  \dd x \right) \circ \dd W_{s}^{i} =0
%\end{align*}
\begin{align*}
& u_\varepsilon (t,y) %- u_\varepsilon (0,y)  
+ \int_{0}^{t} b(s,y) \cdot\nabla u_\varepsilon(s,y) \,  \dd s  \\
& \hspace{4cm} +\sigma\sum_{i=1}^{d} \int_{0}^{t}  \partial_{y_{i}} u( s,y)  \circ \dd W_{s}^{i} = \int_0^t R_\varepsilon(s,y) \, \dd s \, ,\\
& R_\varepsilon(s,y) = \Big[ \int \Big( b(s,y) - b(s,x) \Big) \nabla u(s,x) \theta^\varepsilon(x-y)  \dd x   \Big] \, .
\end{align*}
The function $u_\varepsilon$ is smooth in space. For any fixed $y$, by It\^o formula we have 
\begin{align*}
\dd u_\varepsilon^2 (t,y)  =& \ 2 u_\varepsilon(t,y) \, \dd u_\varepsilon(t,y)\\
 =& - 2 u_\varepsilon (t,y) \, b(t,y) \nabla u_\varepsilon (t,y) \, \dd t -  2 \sigma \, u_\varepsilon (t,y) \sum_{i=1}^{d} \partial_{y_{i}} u_\varepsilon( s,y)  \circ \dd W_{s}^{i} \\
&  + 2 u_\varepsilon (t,y) R_\varepsilon(t,y) \, \dd t \ 
\end{align*}
which, rewritten in the week formulation using a generic test function $\varphi$, reads
\begin{align*}
%& \int \Big( u_\varepsilon^2 (t,y) -  u_\varepsilon^2 (0,y) \Big)
& \int u_\varepsilon^2 (t,y) \varphi(y) \, \dd y + \int_{0}^{t} \int b(s,y)  \nabla u_\varepsilon^2(s,y) \varphi(y) \, \dd y  \dd s \\
& +\sigma \sum_{i=1}^{d} \int_{0}^{t} \hspace{-1mm} \Big( \int \hspace{-1mm} \partial_{y_{i}} u_\varepsilon^2 ( s,y) \varphi(y) \,  \dd y \Big) \circ \dd W_{s}^{i} = \int_0^t \hspace{-1mm} \int \hspace{-1mm} 2 u_\varepsilon (s,y) R_\varepsilon(s,y) \varphi(y) \, \dd y \dd s \ .
\end{align*}
We want now to pass to the limit for $\varepsilon\to 0$ in the different terms. Since for every $t$, $u_\varepsilon\to u$ uniformly on compact sets, by dominated convergence the first term tends to
\begin{equation*}
\int  u^2 (t,y) \varphi(y) \, \dd y \, .
\end{equation*}
For the following terms, we consider $s$ fixed. Using H\"older inequality and the convergence of $\|\nabla u_\varepsilon \|_{L^p}\to \|\nabla u \|_{L^p}$ on compact sets (recall that $\varphi$ is of compact support) for any $p\ge1$, we have
\begin{align*}
& \int b(s,y) \varphi(y) \Big( \nabla u_\varepsilon^2(s,y) -\nabla u^2(s,y) \Big) \dd y \\
&\hspace{2cm} \le \big\| b(s,y) \varphi (y) \big\|_{L^{r'}} \big\| \nabla u_\varepsilon^2(s,y) -\nabla u^2(s,y) \big\|_{L^r} \\
&\hspace{2cm} \le C_{r,\|b\|_{L^p}} \big\| \nabla u_\varepsilon^2(s,y) -\nabla u^2(s,y) \big\|_{L^r} \longrightarrow 0\ ,
\end{align*}
which is enough to obtain the convergence of the second term. In the same way one obtains also the convergence of the third term. As for the term containing the commutator $R_\varepsilon$, we can use again H\"older inequality, the uniform convergence of $u_\varepsilon$, the equi--boundedness of $\nabla u_\varepsilon$ in $L^p$ %_{loc}$
 for every $p\ge1$, and the continuity in mean (for a.e. $y$) of the function $b\in L^p(\R^d)$. This proves (\ref{eq2}). \\

\textbf{Step 2} (equation for $v^2$)
We have that $u$ is a.s. continuous in space and time (and therefore locally bounded) and by definition of weak solution $\nabla u \in L^r ([0,T]\times \R^d)$ for every $r\ge1$ a.s.. It follows that
$f (s) = \int \nabla u^2 (s,y) \varphi(y) \,\dd y$ is still a.s. a function in $L^r(0,T)$. This means that, writing (\ref{eq2}) in It\^o form, the stochastic integral is a martingale and
\begin{align*}
&  \int \E\left[  u^{2}\left(  t,x\right)  \right]  \varphi(x) \, \dd x + \int_{0}^{t} \int b(  s,x) \cdot \nabla \E \left[ u^{2} \left(s,x\right)  \right]  \varphi(x)  \, \dd x \dd s \\
&  =\frac{\sigma^{2}}{2}\int_{0}^{t}\int \E\left[  u^{2}\left(  s,x\right)
\right]  \Delta\varphi(x) \, \dd x \dd s.
\end{align*}
Hence $v(t,x) = \E\left[  u^{2}\left(  t,x\right)  \right]  $ satisfies%
\begin{align*}
&  \int v(  t,x) \varphi( x) \, \dd x +  \int_{0}^{t}\int b(s,x) \cdot\nabla v( t,x)  \varphi( x) \,  \dd x \dd s\\
&  =\frac{\sigma^{2}}{2}\int_{0}^{t} \int v(t,x)  \Delta  \varphi(x)  \, \dd x \dd s
\end{align*}
and is fairly regular: $v\in\calC^0\big([0,T];W^{1,r}(\R^d)\big)$ for $r\ge1$.
This follows by H\"older inequality because
\begin{align*}
\int \big| \nabla v (t,x) \big|^r \dd x &= \int \big| \E \big[ u \nabla u \big] \big| ^r \dd x \le \int \Big( \E \big[ |u|^2 \big] \E \big[ |\nabla u|^2 \big] \Big)^{r/2} \dd x \\
& \le \Big( \int \E \big[ |u|^{2r} \big]^2 \dd x \Big)^{1/2}  \Big(\int \E \big[ |\nabla u|^{2r} \big]^2 \dd x \Big)^{1/2} \le C \ ,
\end{align*}
uniformly in $t$ (similar computations provide the same result for the function $v$). 

Thanks to its global integrability properties, using approximating functions as in the first step, one can prove that $v$ solves
\begin{align}
& \int v^{2}\left(  t,x\right)  \dd x+ \sigma^{2} \int_{0}^{t}
\int\left\vert \nabla v\left(  t,x\right)  \right\vert ^{2} \dd x \dd s  \nonumber \\
& \hspace{3cm} =-2\int_{0}^{t}\int b\left(  s,x\right)  \cdot\nabla v\left(  t,x\right)
v\left(  t,x\right)  \dd x \dd s \ . \label{eq3}
\end{align}

\textbf{Step 3} (final estimates)
We want to find suitable bounds on the last term of (\ref{eq3}) allowing to apply Gronwall inequality. This will complete the proof. 

For every $t\in[0,T]$, we have
\[
\left\vert \int v\, b\cdot\nabla v \, \dd x \right\vert \leq\left(  \int\left\vert \nabla
v\right\vert ^{2} \dd x \right)  ^{1/2}\left(  \int v^{2}\left\vert b\right\vert
^{2} \dd x \right)  ^{1/2};
\]%
\[
\int v^{2}\left\vert b\right\vert ^{2} \dd x \leq\left(  \int v^{2r} \dd x \right)
^{1/r}\left(  \int\left\vert b\right\vert ^{p} \dd x \right)  ^{2/p},
\]
where $1/r+2/p=1$ namely $1/r=1-2/p=(p-2)/p$ :
\[
r=\frac{p}{p-2}\, .
\]
One has the interpolation inequality%
\begin{align*}
\left(  \int v^{\alpha} \dd x \right)  ^{1/\alpha}   \leq\left(  \int
v^{2} \dd x \right)  ^{1-s}\left(  \int\left\vert \nabla v\right\vert
^{2} \dd x \right)  ^{s} , \qquad  \qquad
s   =\frac{\alpha-2}{2\alpha}d \, .
\end{align*}

The idea of the result comes from: $W^{s,2}\subset L^{\alpha}$ for $1/\alpha=1/2-s/d$, namely $\frac{s}{d}=\frac{1}{2}-\frac
{1}{\alpha}=\frac{\alpha-2}{2\alpha}$, $s=\frac{\alpha-2}{2\alpha}d$; and
then
\[
\left(  \int v^{\alpha} \dd x \right)  ^{1/\alpha}\leq\left\Vert v\right\Vert
_{W^{s,2}} \leq  \left\Vert v\right\Vert _{L^{2}}^{1-s}\left\Vert v\right\Vert
_{W^{1,2}}^{s}.
\]
Let us put everything together:%
\begin{align*}
\left(  \int v^{2r}  \dd x \right)  ^{1/r}  & =\left(  \int v^{\alpha}  \dd x \right)
^{2/\alpha}\leq\left(  \int v^{2} \dd x \right)  ^{1-s}\left(  \int\left\vert
\nabla v\right\vert ^{2}  \dd x \right)  ^{s}\\
r  & =\frac{p}{p-2} \, ,  \quad  \alpha=2r \, ,  \quad   s=\frac{\alpha-2}{2\alpha}d
\end{align*}
namely%
\[
s=\frac{2r-2}{4r}d=\frac{2\frac{p}{p-2}-2}{4\frac{p}{p-2}}d=\frac{d}{p} \ .
\]
Thus we have proved:%
\[
\int v^{2}\left\vert b\right\vert ^{2}  \dd x \leq\left(  \int v^{2}  \dd x \right)
^{1-\frac{d}{p}}\left(  \int\left\vert \nabla v\right\vert ^{2}  \dd  x \right)
^{\frac{d}{p}}\left(  \int\left\vert b\right\vert ^{p}  \dd x \right)  ^{2/p}%
\]
and we can bound the last term in (\ref{eq3})

\[
\left\vert \int v\, b\cdot\nabla v\,  \dd x \right\vert \leq\left(  \int\left\vert \nabla
v\right\vert ^{2}  \dd x \right)  ^{\frac{p+d}{2p}}\left(  \int v^{2}  \dd x \right)
^{\frac{p-d}{2p}}\left(  \int\left\vert b\right\vert ^{p}  \dd x \right)  ^{1/p}.
\]
Recall that $ab\leq\frac{a^{s}}{s}+\frac{b^{r}}{r}$, $\frac{1}{s}+\frac{1}{r}=1$.
Then, %using 
%\begin{equation*}
%ab\leq\frac{(\varepsilon^{1/s} \, a)^{s}}{s}+\frac{(\varepsilon^{-1/s} \, b)^{r}}{r}
%\end{equation*}
with $s=\frac{2p}{p+d}$, $r=\frac{2p}{p-d}$ we have%
\[
\left\vert \int v\, b\cdot\nabla v\,  \dd x \right\vert \leq\frac{\sigma^{2}}{2}\left(
\int\left\vert \nabla v\right\vert ^{2}  \dd x \right)  +C\left(  \int
v^{2}  \dd x \right)  \left(  \int\left\vert b\right\vert ^{p}  \dd x \right)  ^{\frac
{2}{p-d}}.
\]
Therefore%
\begin{align*}
& \int v^{2}\left(  t,x\right)  \dd x \leq C\int_{0}^{t}\left(  \int v^{2}  \dd x \right)  \left(  \int\left\vert
b\right\vert ^{p}  \dd x \right)  ^{\frac{2}{p-d}}  \dd s
\end{align*}
hence we may apply Gronwall lemma and deduce $\int v^{2}\left(  t,x\right)
\dd x =0$ if%
\[
\int_{0}^{T}\left(  \int\left\vert b\right\vert ^{p}  \dd x \right)  ^{\frac{2}%
{p-d}}  \dd s  <\infty.
\]
We know that
\[
\int_{0}^{T}\left(  \int\left\vert b\right\vert ^{p}  \dd x \right)  ^{\frac{q}{p}%
}  \dd s  <\infty
\]
for certain $p,q\geq2$ such that $\frac{d}{p}+\frac{2}{q}<1$. Then%
\[
\frac{2}{p-d}  <  \frac{q}{p}%
\]
because $2p<qp-qd$, $\frac{2}{q}<1-\frac{d}{p}$. The proof is complete. \ \ \ 
\end{proof}

\section{Appendix: Technical Lemmas}\label{sec app1}

For completeness, we collect here some modifications of known results used in Section \ref{sec 2}. We will use the notation introduced there.

\begin{lemma}\label{lemma U^n}  Let $U_n$ be the solution of the PDE (\ref{PDE}) for $f=b=b^n$, as defined in Section \ref{sec 2}. Then
\begin{itemize}
\item[i)] $U^{n}\left(  t,x\right)$ and $\nabla U^{n}\left(  t,x\right)$ converge pointwise in $\left(t,x\right)$ to $U\left(  t,x\right)$ and $\nabla U\left(  t,x\right)$ respectively, and the convergence is uniform on compact sets;
\item[ii)] there exists a $\lambda$ for which $\sup_{t,x}\left\vert \nabla U^{n}\left(  t,x\right)  \right\vert \le 1/2$;
\item[iii)] $\left\Vert \nabla^{2}U^{n}\left(  t,x\right)  \right\Vert _{L^{q}_{p}\left(T\right)  }   \leq C $ .
\end{itemize}
\end{lemma}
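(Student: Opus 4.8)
The plan is to exploit the quantitative Schauder-type estimate \eqref{Stima di Schauder} provided by Theorem \ref{Main PDE Theorem}, which gives a bound on the full $H_{2,p}^q(T)$-norm of $U^n$ that is uniform in $n$ because $\|b^n\|_{L_p^q(T)}$ stays bounded along a sequence converging in $L_p^q$. Concretely, since $U^n$ solves \eqref{PDE} with $f=b=b^n$, applying the estimate to the difference $U^n-U$ (which solves the same backward system with source $f=b^n-b$, a linear problem) yields
\[
\|U^n-U\|_{H_{2,p}^q(T)} \le N \, \|b^n-b\|_{L_p^q(T)} \longrightarrow 0 .
\]
This single estimate is the engine driving all three claims.

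For part iii), the bound is immediate: since $\nabla^2 U^n$ is controlled by $\|U^n\|_{H_{2,p}^q(T)} \le N\|b^n\|_{L_p^q(T)}$, and the approximating norms are uniformly bounded, we obtain $\|\nabla^2 U^n\|_{L_p^q(T)}\le C$ uniformly in $n$. For part i), the $H_{2,p}^q$-convergence of $U^n$ to $U$ must be upgraded to pointwise and locally uniform convergence of both $U^n$ and $\nabla U^n$. The natural route is a parabolic Sobolev embedding: under condition \eqref{pq}, functions in $H_{2,p}^q(T)$ (and their spatial gradients) embed into a space of H\"older continuous functions in $(t,x)$, so that norm convergence in $H_{2,p}^q$ forces uniform convergence on compact sets of $U^n\to U$ and $\nabla U^n\to\nabla U$; pointwise convergence follows a fortiori. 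This Morrey/Ladyzhenskaya-type embedding is exactly where condition \eqref{pq} with strict inequality is used, guaranteeing a positive H\"older exponent.

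For part ii), the idea is that the stationary ($t=T$, or equilibrium) behavior of $\nabla U$ can be made small by choosing the zeroth-order coefficient $\lambda$ large. One shows that the $H_{2,p}^q(T)$-norm of $U$, and hence the sup-norm of $\nabla U$ via the embedding of part i), carries a factor that decays as $\lambda\to\infty$; intuitively, the term $-\lambda U$ in \eqref{PDE} acts as a strong damping, so the fixed source $b$ produces a solution whose gradient is uniformly small. Choosing $\lambda$ large enough gives $\sup_{t,x}|\nabla U(t,x)|\le 1/4$, say, and then the uniform convergence $\nabla U^n\to\nabla U$ from part i) pushes $\sup_{t,x}|\nabla U^n|\le 1/2$ for all large $n$ (and one may include the finitely many small-$n$ terms by a further adjustment or by noting they are smooth and bounded).

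The main obstacle is tracking the $\lambda$-dependence of the constant $N$ in \eqref{Stima di Schauder} precisely enough to conclude part ii): the theorem only states that $N$ depends on $\lambda$, not how. The delicate point is to extract from the parabolic estimate an explicit decay in $\lambda$ of the gradient bound (rather than mere boundedness), which requires either revisiting the proof of Theorem \ref{Main PDE Theorem} to isolate the favorable $\lambda$-scaling, or re-deriving the gradient estimate directly from the equation $\partial_t U + \tfrac12\Delta U + b\cdot\nabla U - \lambda U + b = 0$ by an energy or semigroup argument that exposes the damping. The other steps---the difference estimate and the embedding---are essentially routine consequences of the quoted theorem and standard parabolic regularity theory.
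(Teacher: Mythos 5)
Your proposal has a genuine gap at its core: the difference $V^n := U^n - U$ does \emph{not} solve the backward system with source $f = b^n - b$. Since $U^n$ solves the equation with drift $b^n$ while $U$ solves it with drift $b$, subtracting the two equations produces a cross term, and the correct equation is
\[
\partial_t V^n + \tfrac12 \Delta V^n + b\cdot\nabla V^n - \lambda V^n = -\big(b^n - b\big)\cdot\big(Id + \nabla U^n\big), \qquad V^n(T,x) = 0 ,
\]
which is exactly the equation used in the paper. To conclude $\|V^n\|_{H_{2,p}^q}\le C\|b^n-b\|_{L_p^q}\to 0$ from the estimate (\ref{Stima di Schauder}), you must control $\|(b^n-b)\cdot\nabla U^n\|_{L_p^q}$, and this requires a sup bound on $\nabla U^n$ that is uniform in $n$ --- which is precisely part ii) (or at least a uniform-in-$n$ gradient bound obtained some other way, e.g.\ by a global embedding applied to the uniform $H_{2,p}^q$ bound). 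As structured, your argument is therefore circular: you derive i) from the difference equation and ii) from i). The paper breaks the circle by proving ii) \emph{first}: it invokes \cite[Lemma 3.4]{FeFl2} for each fixed $n$ and observes that, inspecting that proof, all bounds depend only on $\|b^n\|_{L_p^q}$ and never on $b^n$ itself; since $\|b^n\|_{L_p^q}\to\|b\|_{L_p^q}$, a single $\lambda$ works for every $n$. Only then does it run the difference-equation argument to obtain iii) and, using the H\"older continuity of $U^n$ and $\nabla U^n$ from \cite[Lemma 10.2]{KR} together with a subsequence extraction, part i).

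Your treatment of ii) is also not a proof as it stands, as you yourself acknowledge. Beyond the missing $\lambda$-dependence of the constant (whose extraction is exactly the content of the cited \cite[Lemma 3.4]{FeFl2}), two further steps fail: locally uniform convergence $\nabla U^n\to\nabla U$ from part i) cannot upgrade a global bound $\sup_{t,x}|\nabla U|\le 1/4$ to a global bound $\sup_{t,x}|\nabla U^n|\le 1/2$, because convergence on compact sets says nothing about the supremum over all of $\mathbb{R}^d$; and your handling of the finitely many small $n$ (``smooth and bounded'', or ``a further adjustment'') does not close the argument --- smoothness gives no bound by $1/2$, and enlarging $\lambda$ separately for each small $n$ would make $\lambda$ depend on $n$, which the lemma requires it not to. Part iii) of your proposal is correct and agrees with the paper's logic (uniform Schauder bound since $\|b^n\|_{L_p^q}$ is bounded), but it is the only part that survives unchanged.
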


\begin{proof} 
The result of the second point is proved in \cite[Lemma 3.4]{FeFl2} for a fixed $n$, but inspecting the proof we see that all the bounds obtained depend on $\|b\|_{L_p^q}$, but never on $b$ itself. Since $\|b^n\|_{L_p^q} \to \|b\|_{L_p^q}$, the uniformity in $n$ follows.\\
To prove the other two points, set $V^n:= U^n-U$; then
\begin{equation*}
\partial_t V^n + \frac{1}{2}\Delta V^{n} + b\cdot\nabla V^{n} - \lambda V^n = - \big(b^{n} - b \big) \cdot \big( Id + \nabla U^n\big)\, ,\quad V^{n}\left(  T,x\right)  =0\, .
\end{equation*}
From the bound (\ref{Stima di Schauder}) on the solution provided by Theorem \ref{Main PDE Theorem}, we obtain
\begin{equation*}
\big\| V^n \big\|_{H_{2,p}^q} \le  N \|b^n-b\|_{L_p^q} \to 0 \ .
\end{equation*}
It follows that $U^n\to U$ in $H_{2,p}^q$, which proves the last point. Since by  \cite[Lemma 10.2]{KR} $U, U^n, \nabla U$ and $\nabla U^n$ are all H\"older continuous functions, there exists a subsequence (that we still call $U^n$) s.t. $U^n\to U$ and $\nabla U^n\to \nabla U$ for every $(t,x)$ and uniformly on compacts. \ \ \ \end{proof}

\begin{remark}\label{uniformity in n} The following results hold uniformly in $n$ because, as remarked in the previous proof, all the bounds obtained depend on the norm of $b$.
\begin{itemize}
\item[i)] From \cite[Lemma 3.5]{FeFl2} we have
\begin{equation} \label{stima invertibilita}
\sup_n\sup_{t\in[0,T]}\big| \nabla ( \gamma^n_t)^{-1} (\cdot) \big|_{\calC(\R^d)} \le 2 \ ;
\end{equation}
\item[ii)] from the uniform boundedness of the coefficients ($U^n$ and $\nabla U^n$) of the SDE (\ref{new SDE}), we get
\begin{equation}\label{integrability Y}
\sup_{t\in[0,T]} \E \Big[ \big| \psi_t^n(x)  \big|^a  \Big] \le C \Big( 1+ |x|^a \Big)\ .
\end{equation}
\end{itemize}
\end{remark}

\begin{lemma}\label{lem e^A^n}
For every $n$, both the process $A^n$ defined by (\ref{eq A^n}) and the one defined by (\ref{new A^n}) are continuous, adapted, nondecreasing, with $A_0^n=0$, $\E [ A_T^n] \le C$ and for every $k\in \R$, $\E [ e^{k A_T^n}]\le C$. The constant $C$ can be chosen independently of $n$.
\end{lemma}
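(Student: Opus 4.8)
The plan is to establish the elementary pathwise properties directly from the definitions and then reduce both exponential-moment bounds to a single Krylov-type estimate, supplied by Lemma \ref{lemma integrability b(X)}, combined with Khasminskii's lemma. The elementary properties---continuity, adaptedness, monotonicity and $A_0^n=0$---are immediate, since in both (\ref{eq A^n}) and (\ref{new A^n}) the process $A_t^n$ is a time integral of a nonnegative adapted integrand.

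For the process (\ref{new A^n}) the integrand is $C_{p,d}|\nabla^2 U^n(s,\phi_s^n)|^2$. By Lemma \ref{lemma U^n}(iii), $\nabla^2 U^n$ is bounded in $L_p^q(T)$ uniformly in $n$, so $|\nabla^2 U^n|^2$ is bounded in $L_{p/2}^{q/2}(T)$ uniformly in $n$. The point is that these exponents still lie in the admissible range for the Krylov estimate: by (\ref{pq}), $\frac{d}{p/2}+\frac{2}{q/2}=2\big(\frac{d}{p}+\frac{2}{q}\big)<2$. Thus Lemma \ref{lemma integrability b(X)} applies to the flow $\phi^n$ and gives $\E[A_T^n]\le C$ with $C$ depending on $b^n$ only through $\|b^n\|_{L_p^q}$ and on $\|\nabla^2 U^n\|_{L_p^q}$, hence uniform in $n$. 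Applying the same estimate on a short subinterval $[t,t+\delta]$ yields $\sup_t\E[A_{t+\delta}^n-A_t^n\mid\scrF_t]\le C\,\big\||\nabla^2 U^n|^2\big\|_{L_{p/2}^{q/2}([t,t+\delta])}$, which can be made $\le 1/2$ uniformly in $n$ by choosing $\delta$ small (absolute continuity of the integral). Khasminskii's lemma then gives $\E[e^{A_{t+\delta}^n-A_t^n}\mid\scrF_t]\le 2$, and chaining over the $\lceil T/\delta\rceil$ subintervals produces $\E[e^{A_T^n}]\le C$. Replacing $A^n$ by $kA^n$ only forces a smaller $\delta$ when $k>0$, while the case $k\le0$ is trivial because $A_T^n\ge0$; this gives $\E[e^{kA_T^n}]\le C$ for every $k\in\R$.

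For the process (\ref{eq A^n}) I reduce to the previous case via the fundamental theorem of calculus along the segment joining $\phi_s$ and $\phi_s^n$. Writing $\phi_s^{n,r}=r\phi_s^n+(1-r)\phi_s$ and using Cauchy--Schwarz in the variable $r$, one has
\begin{equation*}
\frac{\big|\nabla U^n(s,\phi_s^n)-\nabla U^n(s,\phi_s)\big|^2}{\big|\phi_s^n-\phi_s\big|^2}\le \int_0^1\big|\nabla^2 U^n(s,\phi_s^{n,r})\big|^2\,\dd r,
\end{equation*}
so that $A_T^n\le 2\int_0^1\!\int_0^T\big|\nabla^2 U^n(s,\phi_s^{n,r})\big|^2\,\dd s\,\dd r$. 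Each inner integral is an additive functional of the interpolated process $\phi^{n,r}$ introduced in the proof of the Claim, and Lemma \ref{lemma integrability b(X)} furnishes the Krylov estimate for exactly these processes, uniformly in $n$ and $r$; the bound $\E[A_T^n]\le C$ then follows as before. For the exponential moment I use Jensen's inequality in the probability variable $\dd r$ on $[0,1]$,
\begin{equation*}
e^{kA_T^n}\le \int_0^1\exp\Big(2k\int_0^T\big|\nabla^2 U^n(s,\phi_s^{n,r})\big|^2\,\dd s\Big)\,\dd r,
\end{equation*}
take expectations, and bound each integrand by the Khasminskii argument above applied to $\phi^{n,r}$, uniformly in $n$ and $r$.

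The main obstacle is precisely the uniform-in-$n$ (and, for (\ref{eq A^n}), uniform-in-$r$) Krylov estimate for the interpolated flows $\phi^{n,r}$, whose drift $r\,b^n(s,\phi_s^n)+(1-r)\,b(s,\phi_s)$ is adapted but is not a function of $\phi^{n,r}$ itself, so that $\phi^{n,r}$ is not a Markov diffusion. This is the content of Lemma \ref{lemma integrability b(X)}; granting it, the only remaining care is to ensure that every constant depends on $b$ only through $\|b\|_{L_p^q}$ and on $\|\nabla^2 U^n\|_{L_p^q}$, which holds by Lemma \ref{lemma U^n} together with $\|b^n\|_{L_p^q}\to\|b\|_{L_p^q}$.
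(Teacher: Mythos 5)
Your proof is correct and takes essentially the same route as the paper's: the paper simply delegates to \cite{FeFl1} (Lemma 7 and Corollary 13, used once for each drift), and the content of those cited proofs is exactly your combination of the segment-interpolation bound, a Krylov-type estimate for the non-Markovian interpolated It\^o processes $\phi^{n,r}$, and Khasminskii's lemma together with Jensen's inequality in the interpolation variable $r$. One cosmetic remark: for the process (\ref{new A^n}) the detour through $L_{p/2}^{q/2}$ exponents is unnecessary (and not literally covered by the statement of Lemma \ref{lemma integrability b(X)}); applying that lemma directly to $f^n=\nabla^2 U^n$, which converges in $L_p^q$ by Lemma \ref{lemma U^n}, already yields the required bound since its exponent satisfies $2\varepsilon\ge 2$.
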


\begin{proof}
For the process defined by (\ref{eq A^n}) the proof follows the same steps of the proof of \cite[Lemma 7]{FeFl1}. We only remark that the function $U$ is the solution of a different PDE, but it has the same properties in terms of regularity. Moreover, the flows $\phi$ and $\phi^n$ solve two SDEs with different drifts $b$ and $b^n$, which means that in the proof one has to use twice the result of \cite[Corollary 13]{FeFl1}, once for every drift.

For the process defined by (\ref{new A^n}), the result is already contained in \cite[Corollary 13]{FeFl1}. \ \ \ 
\end{proof}

\begin{lemma}\label{lemma integrability b(X)}
Let $f^n$ be a sequence of vector fields belonging to $L_p^q$, convergent to $f\in L_p^q$. Then, there exists $\varepsilon>1$ s.t.
\begin{equation}\label{eq integrability b(X)}
\E \Big[ \int_0^T  \big| f^n(s,\phi_s^n)  \big|^{2\varepsilon}  \dd s  \Big] \le C <\infty \ .
\end{equation} 
\end{lemma}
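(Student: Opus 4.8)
The plan is to reduce the claim to a Krylov-type estimate on the occupation measure of the diffusion $\phi^n$, and to observe that the strict inequality in (\ref{pq}) is exactly what leaves room to pick an exponent $\varepsilon>1$. Set $g^n:=|f^n|^{2\varepsilon}$. I would choose $\varepsilon$ so that $\varepsilon<(d/p+2/q)^{-1}$, which is compatible with $\varepsilon>1$ precisely because $d/p+2/q<1$; taking $\varepsilon$ close to $1$ also keeps the dual exponents above $1$ since $p,q\ge2$. With $p_1:=p/(2\varepsilon)$ and $q_1:=q/(2\varepsilon)$ one has $g^n\in L_{p_1}^{q_1}(T)$ with
\[
\big\|g^n\big\|_{L_{p_1}^{q_1}(T)}=\|f^n\|_{L_p^q}^{2\varepsilon},
\]
which is bounded uniformly in $n$ because $f^n\to f$ in $L_p^q$. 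Moreover these exponents are \emph{subcritical} for the additive-noise generator $\tfrac12\Delta$, namely
\[
\frac{d}{p_1}+\frac{2}{q_1}=2\varepsilon\Big(\frac{d}{p}+\frac{2}{q}\Big)<2 .
\]

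Next I would invoke the Krylov estimate for the solution of $\dd X_t=b^n(t,X_t)\,\dd t+\dd W_t$: for every $g\in L_{p_1}^{q_1}(T)$ with $d/p_1+2/q_1<2$,
\[
\E\Big[\int_0^T g(s,\phi_s^n)\,\dd s\Big]\le C\,\big\|g\big\|_{L_{p_1}^{q_1}(T)},
\]
with $C$ depending only on $d,p_1,q_1,T$ and $\|b^n\|_{L_p^q}$. Applying this with $g=g^n$ and using the two uniform bounds just recorded yields (\ref{eq integrability b(X)}) with a constant independent of $n$. The estimate in this form is the one established by \cite{KR}, so at this level of the argument the lemma follows essentially by citation once the uniformity is checked.

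The one point that needs care, and which I expect to be the main obstacle, is the uniformity in $n$ of the constant $C$. The natural route is a Girsanov transform: under the measure with density $\exp\big(-\int_0^T b^n(s,\phi_s^n)\,\dd W_s-\tfrac12\int_0^T|b^n(s,\phi_s^n)|^2\,\dd s\big)$ the process $\phi^n$ becomes a Brownian motion, so that by Cauchy--Schwarz the left-hand side is controlled by the product of an exponential moment of the Girsanov density and the classical Krylov--Khasminskii bound for Brownian occupation times in the subcritical range. Controlling the exponential moment of the Girsanov density uniformly in $n$ is exactly where the Ladyzhenskaya--Prodi--Serrin condition (\ref{pq}) on $b^n$ enters, and the resulting bound depends only on $\|b^n\|_{L_p^q}$ --- precisely the mechanism already used in Lemma \ref{lemma U^n} and Remark \ref{uniformity in n}. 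Since $\|b^n\|_{L_p^q}\to\|b\|_{L_p^q}$, this supremum is finite, giving the uniform constant and completing the argument.
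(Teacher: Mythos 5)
Your argument is correct in substance and is essentially the paper's own proof written out in full: the paper disposes of this lemma by citation to \cite{FeFl1} (Corollary 13 together with the proofs of Lemma 8 and Corollary 9 there), and those results are established by exactly the mechanism you reconstruct --- a Girsanov transform reducing $\phi^n$ to a Brownian motion, a Krylov--Khasminskii occupation-time bound in the subcritical range, exponential moments of the density controlled through condition (\ref{pq}), and uniformity in $n$ because every constant depends only on $\|b^n\|_{L_p^q}$ and $\|f^n\|_{L_p^q}$. Your observation that the strict inequality in (\ref{pq}) is what creates the room for an exponent $\varepsilon>1$ is also precisely the paper's remark that the argument of \cite{FeFl1} ``still works due to the strict inequality.''

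One concrete slip in your bookkeeping of exponents: the claim that taking $\varepsilon$ close to $1$ ``keeps the dual exponents above $1$ since $p,q\ge2$'' is not right at the boundary. You need $p_1=p/(2\varepsilon)>1$ and $q_1=q/(2\varepsilon)>1$, i.e.\ $\varepsilon<\min(p,q)/2$, and this is compatible with $\varepsilon>1$ only if $p,q>2$. Under (\ref{pq}) one automatically has $q>2$, but $p=2$ is admissible when $d=1$ (for instance $p=2$, $q>4$), and in that case $p_1=1/\varepsilon<1$ for every $\varepsilon>1$, so the H\"older-duality step underlying the Krylov estimate --- and hence your reduction --- breaks down: $|f^n|^{2\varepsilon}$ then lies only in $L^{1/\varepsilon}$ in space, which is not a normed space. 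The correct constraint is $1<\varepsilon<\min\big(p/2,\,q/2,\,(d/p+2/q)^{-1}\big)$, a nonempty interval precisely when $p,q>2$. This restriction is equally implicit in the route the paper cites, since the estimates of \cite{FeFl1} rest on the same H\"older step; so it is a shared wrinkle of the edge case $p=2$, $d=1$ rather than a defect of your approach relative to the paper's, and away from that case your proof is complete.
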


\begin{proof}   
To prove the result for a fixed $n$ one can use \cite[Corollary 13]{FeFl1} and follow the proof of \cite[Lemma 8 and Corollary 9]{FeFl1}, which still works due to the strict inequality in the conditions imposed on $p,q$. Then, since all the bounds only depend on the norm of $f$ but never on the function itself, one obtains that (\ref{eq integrability b(X)}) is uniform in $n$. \ \ \ 
\end{proof}

\section{Appendix: Sobolev Regularity of Random Fields}

Let $r\geq1$ be given. We \ recall that $f\in W_{loc}^{1,r}\left(
\mathbb{R}^{d}\right)  $ if $f\in L_{loc}^{r}\left(  \mathbb{R}^{d}\right)  $
and there exist $g_{i}\in L_{loc}^{r}\left(  \mathbb{R}^{d}\right)  $,
$i=1,...,d$, such that
\[
\int_{\mathbb{R}^{d}}f\left(  x\right)  \partial_{x_{i}}\varphi\left(
x\right)  dx=-\int_{\mathbb{R}^{d}}g_{i}\left(  x\right)  \varphi\left(
x\right)  dx
\]
for all $\varphi\in C_{0}^{\infty}\left(  \mathbb{R}^{d}\right)  $. When this
happens, we set $\partial_{x_{i}}f\left(  x\right)  =g_{i}\left(  x\right)  $.
From the definition and easy arguments one has the following criterion: if
$f\in L_{loc}^{r}\left(  \mathbb{R}^{d}\right)  $ and there exist a sequence
$\left\{  f_{n}\right\}  \subset W_{loc}^{1,r}\left(  \mathbb{R}^{d}\right)  $
such that $f_{n}\rightarrow f$ in $L_{loc}^{1}\left(  \mathbb{R}^{d}\right)  $
(or even in distributions) and for all $R>0$ one has a constant $C_R  >0$ such that $\int_{B_R  }\left\vert \nabla
f_{n}\left(  x\right)  \right\vert ^{r}dx\leq C_R $ uniformly
in $n$, then $f\in W_{loc}^{1,r}\left(  \mathbb{R}^{d}\right)  $. This
criterion will not be used below; it is only stated for comparison with the
next result.

Let now $F:\Omega\times\mathbb{R}^{d}\rightarrow\mathbb{R}$ be a random field.
When we use below this name we always assume it is jointly measurable.

\begin{lemma}
\label{lemma appendix} Assume that $F\left(  \omega,\cdot\right)  \in
L_{loc}^{r}\left(  \mathbb{R}^{d}\right)  $ for $P$-a.e. $\omega$ and there
exist a sequence $\left\{  F_{n}\right\}  _{n\in\mathbb{N}}$ of random fields
such that

\begin{enumerate}
\item $F_{n}\left(  \omega,\cdot\right)  \rightarrow F\left(  \omega
,\cdot\right)  $ in distributions in probability, namely%
\[
P-\lim_{n\rightarrow\infty}\int_{\mathbb{R}^{d}}F_{n}\left(  \omega,x\right)
\varphi\left(  x\right)  \dd x   =   \int_{\mathbb{R}^{d}}F\left(  \omega,x\right)
\varphi\left(  x\right)  \dd x
\]
for every $\varphi\in C_{0}^{\infty}\left(  \mathbb{R}^{d}\right)  ;$

\item $F_{n}\left(  \omega,\cdot\right)  \in W_{loc}^{1,r}\left(
\mathbb{R}^{d}\right)  $ for $P$-a.e. $\omega$ and for every $R>0$ 
there exists a constant $C_R>0$ such that
\[
\E \left[  \int_{B_R  }\left\vert \nabla F_{n}\left(  x\right)
\right\vert ^{r}  \dd x \right]  \leq C_R
\]
uniformly in $n$.
\end{enumerate}

Then $F\left(  \omega,\cdot\right)  \in W_{loc}^{1,r}\left(  \mathbb{R}%
^{d}\right)  $ for $P$-a.e. $\omega$,%
\begin{equation}
\E \left[  \int_{\mathbb{R}^{d}}\partial_{x_{i}}F\left(  \cdot,x\right)
\varphi\left(  x\right)  Z \,  \dd x  \right]  =  \lim_{n\rightarrow\infty}  \E \left[
\int_{\mathbb{R}^{d}} \partial_{x_{i}} F_{n} \left(  \cdot,x\right)
\varphi\left(  x\right)  Z \,  \dd x \right]  \label{convergence 2}%
\end{equation}
for all $\varphi\in C_{0}^{\infty}\left(  \mathbb{R}^{d}\right)  $ and bounded
r.v. $Z$,%
\begin{equation}
P-\lim_{n\rightarrow\infty}\int_{\mathbb{R}^{d}}\partial_{x_{i}}F_{n}\left(
\omega,x\right)  \varphi\left(  x\right)  \dd x  =  - \int_{\mathbb{R}^{d}}F\left(
\omega,x\right)  \partial_{x_{i}}\varphi\left(  x\right)
\dd x \label{convergence 3}%
\end{equation}
for all $\varphi\in C_{0}^{\infty}\left(  \mathbb{R}^{d}\right)  $, and 
for every $R>0$
\begin{equation}
\E\left[  \int_{B_R }\left\vert \nabla F\left(  x\right)
\right\vert ^{r}  \dd x \right]  \leq\underset{n\rightarrow\infty}{\lim\sup} \ \E\left[
\int_{B_R  }\left\vert \nabla F_{n}\left(  x\right)
\right\vert ^{r}  \dd x \right]  .\label{property 3}%
\end{equation}

\end{lemma}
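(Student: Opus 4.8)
The plan is to obtain $\nabla F$ as a weak limit of the gradients $\nabla F_n$ and to identify this limit using the distributional convergence in hypothesis~1. I will carry out the argument for $r>1$, where the spaces $L^r$ are reflexive; the case $r=1$ is not needed for the application (Theorem~\ref{main teo} invokes the lemma for every $r\ge 1$, and on bounded sets $W^{1,r}_{loc}\subset W^{1,1}_{loc}$ for $r>1$, so the $r=1$ conclusion follows from a larger exponent). Fix $i\in\{1,\dots,d\}$. By hypothesis~2 the family $\{\partial_{x_i}F_n\}$ is bounded in $L^r(\Omega\times B_R)$ for each $R$; since this space is reflexive, a diagonal extraction over $R=1,2,\dots$ yields a subsequence $\{F_{n_k}\}$ and functions $G_i$ with $G_i\in L^r(\Omega\times B_R)$ for every $R$ (hence $G_i(\omega,\cdot)\in L^r_{loc}$ for a.e.\ $\omega$) such that $\partial_{x_i}F_{n_k}\rightharpoonup G_i$ weakly in each $L^r(\Omega\times B_R)$; consistency of the limits across $R$ is automatic by uniqueness of weak limits.

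I first record the easy identity~(\ref{convergence 3}): since $F_n(\omega,\cdot)\in W^{1,r}_{loc}$, integration by parts gives $\int \partial_{x_i}F_n\,\varphi\,\dd x=-\int F_n\,\partial_{x_i}\varphi\,\dd x$ almost surely, and hypothesis~1 applied to the test function $\partial_{x_i}\varphi\in C_0^\infty$ gives convergence in probability of the right-hand side to $-\int F\,\partial_{x_i}\varphi\,\dd x$.

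The heart of the matter is to identify $G_i=\partial_{x_i}F$. Fix $\varphi\in C_0^\infty(B_R)$ and a bounded random variable $Z$, and set $Y_k:=\int \partial_{x_i}F_{n_k}\,\varphi\,\dd x$. Testing the weak convergence against $Z\varphi\in L^{r'}(\Omega\times B_R)$ gives $\E[ZY_k]\to\E[Z\int G_i\varphi\,\dd x]$. On the other hand $Y_k=-\int F_{n_k}\partial_{x_i}\varphi\,\dd x$ converges in probability to $-\int F\,\partial_{x_i}\varphi\,\dd x$ by hypothesis~1, and by H\"older $\E[|Y_k|^r]\le \|\varphi\|_{L^{r'}(B_R)}^{\,r}\,\E[\int_{B_R}|\partial_{x_i}F_{n_k}|^r\dd x]$ is bounded uniformly in $k$, so $\{Y_k\}$ is uniformly integrable and Vitali's theorem upgrades this to $L^1(\Omega)$ convergence; hence $\E[ZY_k]\to -\E[Z\int F\,\partial_{x_i}\varphi\,\dd x]$ as well. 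Matching the two limits and using that $Z$ is an arbitrary bounded random variable yields $\int G_i\varphi\,\dd x=-\int F\,\partial_{x_i}\varphi\,\dd x$ almost surely, for each fixed $\varphi$. Running this over a countable family of test functions dense in $C_0^\infty$ and extending by continuity (using $F,G_i\in L^r_{loc}$) produces a single full-measure event on which $F(\omega,\cdot)\in W^{1,r}_{loc}$ with $\partial_{x_i}F=G_i$. This simultaneously proves~(\ref{convergence 2}) along the subsequence; since $\partial_{x_i}F$ is now uniquely determined by $F$, every subsequence admits a further weakly convergent one with the same limit, and the standard subsequence principle promotes~(\ref{convergence 2}) to the full sequence. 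Finally,~(\ref{property 3}) is weak lower semicontinuity of the $L^r(\Omega\times B_R)$ norm: choosing a subsequence realizing $\limsup_n\E[\int_{B_R}|\nabla F_n|^r\dd x]$ and extracting a weakly convergent sub-subsequence (whose limit is $\nabla F$ by the identification), lower semicontinuity gives the asserted inequality.

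I expect the identification step to be the main obstacle. The tension is that hypothesis~2 controls $\nabla F_n$ only weakly in $L^r$, whereas hypothesis~1 gives only convergence in probability of the smoothed primitives $\int F_n\varphi\,\dd x$, and a priori a weak $L^r$-limit of the gradients need not be compatible with an in-probability limit of the primitives. The reconciliation is precisely the uniform integrability of $Y_k$ extracted from the $L^r$ gradient bound via H\"older, which converts convergence in probability into $L^1$ convergence and allows the two descriptions of $\lim_k\E[ZY_k]$ to be matched. Everything else is reflexivity, a diagonal argument, and the subsequence principle.
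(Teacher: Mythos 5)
Your proposal is correct and takes essentially the same route as the paper's own proof: weak compactness of $\{\nabla F_n\}$ in $L^r(\Omega\times B_R)$ with a diagonal argument over $R$, identification of the weak limit with $\nabla F$ by testing against $Z\varphi$ and reconciling with hypothesis~1 via H\"older plus Vitali (convergence in probability upgraded to $L^1(\Omega)$), arbitrariness of $Z$, a countable dense family of test functions to get a single full-measure event, the subsequence principle to recover the full sequence, and weak lower semicontinuity of the norm for the last inequality. Your explicit restriction to $r>1$ (where reflexivity is available) is a small refinement that the paper leaves implicit, and your appeal to abstract lower semicontinuity replaces the paper's equivalent explicit duality computation with simple functions $\xi=\sum_k\varphi_k Z_k$.
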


\begin{proof}
Given $R>0$,  there is a subsequence $\left\{  n_{k}\right\}  $ and a
vector valued random field $G$ such that $\nabla F_{n_{k}}$ converges
weakly to $G$ in $L^{r}\left(  \Omega\times \R^d  \right)$, as $k\rightarrow\infty$. Taking $R\in\mathbb{N}$, we may apply a diagonal procedure and find a single subsequence $\left\{  n_{k}\right\}  $ and vector
valued random fields $G^{R}$, $R\in\mathbb{N}$, such that $\nabla F_{n_{k}}$
converges weakly to $G^{R}$ in $L^{r}\left(  \Omega\times B_R
\right)  $, as $k\rightarrow\infty$, for each $R\in\mathbb{N}$. Using suitable
test functions, one can see that $G^{R^{\prime}}=G^{R}$ on $\Omega\times
B_R$ if $R^{\prime}>R$. Hence we have found a single vector
valued random field $G$, such that $\nabla F_{n_{k}}$ converges weakly to $G$
in $L^{r}\left(  \Omega\times B_R  \right)  $, as
$k\rightarrow\infty$, for each $R\in\mathbb{N}$ and thus for each real $R>0$.
At the end of the proof, $G$ will be identified by $\nabla F$, independently of the
subsequence $\left\{  n_{k}\right\}  $. Thus, a fortiori, the full sequence
$\nabla F_{n}$ converges weakly to $G$ in $L^{r}\left(  \Omega\times \R^d \right) $, as $n\rightarrow\infty$. For this
reason, to simplify notations, we omit the notation of the subsequence. 

For each $\varphi\in C_{0}^{\infty}( \mathbb{R}^{d})  $, by
assumptions 1 and 2
\begin{align*}
\int_{\mathbb{R}^{d}}F\left(  \omega,x\right)  \partial_{x_{i}}\varphi\left(
x\right)  \dd x  & =  \lim_{n\rightarrow\infty}\int_{\mathbb{R}^{d}}F_{n}\left(
\omega,x\right)  \partial_{x_{i}}\varphi\left(  x\right)  \dd x \\
& = - \lim_{n\rightarrow\infty}\int_{\mathbb{R}^{d}}\partial_{x_{i}}F_{n}\left(
\omega,x\right)  \varphi\left(  x\right)  \dd x \, ,
\end{align*}
the limits being understood in probability. For each bounded r.v. $Z$, this
implies that
\[
\lim_{n\rightarrow\infty}\int_{\mathbb{R}^{d}}\partial_{x_{i}}F_{n}\left(
\omega,x\right)  \varphi\left(  x\right)  Z\left(  \omega\right)
\dd x   = -  \int_{\mathbb{R}^{d}}F\left(  \omega,x\right)  \partial_{x_{i}}%
\varphi\left(  x\right)  Z\left(  \omega\right)  \dd x
\]
in probability. This limit holds also in $L^{1}\left(  \Omega\right)  $ by
Vitali convergence criterion because, by H\"older inequality,
\begin{align*}
&\E \left[  \left\vert \int_{\mathbb{R}^{d}}  \partial_{x_{i}}F_{n}\left(
\omega,x\right)  \varphi\left(  x\right)  Z\left(  \omega\right)
\dd x \right\vert ^{p}\right] \\
& \hspace{4cm} \leq C_{R,p} \left\Vert \varphi\right\Vert _{L^{\infty}} \| Z\|_{L^\infty} \E \left[  \int_{B_R  } \left\vert \partial_{x_{i}}
F_{n}\left(  \omega,x\right)  \right\vert ^{p}  \dd x \right]  \\
& \hspace{4cm}  \leq C_{R,p} \left\Vert \varphi\right\Vert _{L^{\infty}} \| Z\|_{L^\infty} C_R
\end{align*}
uniformly in $n$, for some $p>1$, and with $R$ such that $B_R$ contains the support of $\varphi$.

From the weak convergence above, we also get that %(take again $R$ such that $B_R$ contains the support of $\varphi$)
\[
\lim_{n\rightarrow\infty}\E\left[  \int_{\mathbb{R}^{d}}\partial_{x_{i}}%
F_{n}\left(  \cdot,x\right)  \varphi\left(  x\right)  Z \dd x \right]  = \E \left[
\int_{\mathbb{R}^{d}}G_{i}\left(  \cdot,x\right)  \varphi\left(  x\right)
Z \dd x \right]  .
\]
Hence%
\[
\E\left[  \int_{\mathbb{R}^{d}}G_{i}\left(  \cdot,x\right)  \varphi\left(
x\right)  Z \dd x\right]  = - \E\left[  \int_{\mathbb{R}^{d}}F\left(  \cdot,x\right)
\partial_{x_{i}}\varphi\left(  x\right)  Z \dd x\right]  .
\]
By the arbitrariness of $Z$ this gives us%
\begin{equation}
\int_{\mathbb{R}^{d}}F\left(  \omega,x\right)  \partial_{x_{i}}\varphi\left(
x\right)  \dd x  =  - \int_{\mathbb{R}^{d}}G_{i}\left(  \omega,x\right)
\varphi\left(  x\right)  \dd x \label{identity 1}%
\end{equation}
for $P$-a.e. $\omega$. This is the identification of $G$ mentioned above,
which implies the weak convergence of the full sequence $\nabla F_{n}$. 

Identity (\ref{identity 1}) holds $P$-a.s. for every $\varphi\in C_{0}%
^{\infty}(  \mathbb{R}^{d})  $ a priori given. Thus it holds
$P$-a.s., uniformly on a dense countable set $\mathcal{D}$ of test functions
$\varphi$, dense for instance in $W_{loc}^{1,r^{\prime}}(  \mathbb{R}%
^{d})  $, $\frac{1}{r}+\frac{1}{r^{\prime}}=1$. Using the integrability
properties of $F\left(  \omega,\cdot\right)  $ and $G_{i}\left(  \omega
,\cdot\right)  $ we may extend identity (\ref{identity 1}) to all $\varphi\in
W_{loc}^{1,r^{\prime}}(  \mathbb{R}^{d})  $ and thus all
$\varphi\in C_{0}^{\infty}(  \mathbb{R}^{d})  $, uniformly with
respect to the good set of $\omega$ for which it holds on $\mathcal{D}$. 

Thus, from identity (\ref{identity 1}) in the stronger form just explained, we
deduce - by definition - that $F\left(  \omega,\cdot\right)  \in W_{loc}%
^{1,r}(  \mathbb{R}^{d})  $ for $P$-a.e. $\omega$. And $\nabla
F\left(  \omega,x\right)  =G\left(  \omega,x\right)  $. We immediately have
(\ref{convergence 2}) and (\ref{convergence 3}). %Finally,%

%In the following, all test functions $\varphi$ will again be supported on $B_R$ for some fixed $R>0$. 
We have shown that, for every function $\xi(\omega,x)$ of the form
\begin{equation*}
\xi(\omega,x) = \sum_{k=1}^m \varphi_k(x) Z_k (\omega)\, ,
\end{equation*}
 with $\varphi$ and $Z$ as above, we have
\begin{align*}
\left\vert \E \left[  \int_{B_R}\nabla F\left(  \cdot,x\right) \xi(\cdot,x) \,  \dd x \right]  \right\vert  
& \leq\underset{n\rightarrow \infty}{\lim}\left\vert \E \left[  \int_{B_R}\nabla F_{n}\left(
\cdot,x\right)  \xi(\cdot,x) \,    \dd x \right]  \right\vert \\
& \hspace{-3cm} \leq \E \left[  \int_{B_R }\left\vert \xi(\cdot,x) \right\vert ^{r^{\prime}}  \dd x \right]  ^{1/r^{\prime}}  \underset{n\rightarrow\infty}{\lim\sup} \, \E \left[  \int_{B_R } \left\vert \nabla F_{n}\left( \cdot, x\right)  \right\vert
^{r} \dd x\right]  ^{1/r} %
\end{align*}
(in the last passage we have used H\"older inequality). The set of functions $\xi$ introduced is dense in $L^{r'} (\Omega\times B_R ) $, so that
\begin{align*}
 \E \bigg[ \int_{B_R} \big| \nabla F(\cdot,x) \big|^r \dd x \bigg]^{1/r} & = \big\| \nabla F \big \|_{L^r(\Omega \times B_R)} \\
 &\le \sup_{\|\xi\|_{L^{r'}} \le 1} \left|  \E \left[  \int_{B_R}\nabla F\left(  \cdot,x\right) \xi(\cdot,x) \,  \dd x \right]  \right| \\
 &\le \underset{n\rightarrow\infty}{\lim\sup} \, \E \left[  \int_{B_R } \left\vert \nabla F_{n}\left( \cdot, x\right)  \right\vert
^{r} \dd x\right]  ^{1/r} .
\end{align*}  %il lim passa attraverso ff continue (x^r).
This completes the proof. \ \ \ 
\end{proof}

\clearpage

\end{document}